\newtheorem{thm}{Theorem}
\numberwithin{thm}{section}
\newtheorem{lemma}[thm]{Lemma}
\newtheorem{prop}[thm]{Proposition}
\newtheorem{open}[thm]{Problem}
\theoremstyle{definition}
\newtheorem{definition}[thm]{Definition}
\newtheorem{example}[thm]{Example}
\newtheorem{remark}[thm]{Remark}
\DeclareMathOperator{\cc}{cc}
\DeclareMathOperator{\flip}{flip}
\DeclareMathOperator{\Frob}{Frob}
\DeclareMathOperator{\rev}{rev}
\DeclareMathOperator{\SSYT}{SSYT}
\newcommand{\SYT}{\mathrm{SYT}}
\newcommand{\ccind}{\mathrm{ccTab}}
\newcommand{\sh}{\mathrm{sh}}
\newcommand{\sgn}{\mathrm{sgn}}
\newcommand{\maj}{\mathrm{maj}}
\newcommand{\comaj}{\mathrm{comaj}}
\newcommand{\DR}{\mathrm{DR}}
\newcommand{\CC}{\mathbb{C}}
\newcommand{\QQ}{\mathbb{Q}}
\newcommand{\Tab}{\mathrm{Tab}}
\newcommand{\Des}{\mathrm{Des}}
\newcommand{\des}{\mathrm{des}}
\newcommand{\grFrob}{\mathrm{grFrob}}
\newtheorem*{MainTheorem}{Theorem \ref{thm:main}}
\title[Higher Specht polynomials under the diagonal action]{Higher Specht polynomials under the diagonal action}
\author{Maria Gillespie}
\address{Maria Gillespie \\ Department of Mathematics \\Colorado State University \\ Fort Collins, CO 80523 \\ United States of America}
\email{\href{mailto:Maria.Gillespie@colostate.edu}{Maria.Gillespie@colostate.edu}}
\thanks{The author was partially supported by NSF DMS award number 2054391.}
\date{\today}
\begin{document}

\begin{abstract}
 We introduce higher Specht polynomials - analogs of Specht polynomials in higher degrees - in two sets of variables $x_1,\ldots,x_n$ and $y_1,\ldots,y_n$ under the diagonal action of the symmetric group $S_n$.  This generalizes the classical Specht polynomial construction in one set of variables, as well as the higher Specht basis for the coinvariant ring $R_n$ due to Ariki, Terasoma, and Yamada, which has the advantage of respecting the decomposition into irreducibles.
 
 As our main application of the general theory, we provide a higher Specht basis for the hook shape Garsia--Haiman modules.  In the process, we obtain a new formula for their doubly graded Frobenius series in terms of new generalized cocharge statistics on tableaux.
\end{abstract}

\maketitle{}

\section{Introduction}

The \textit{diagonal action} of the symmetric group $S_n$ on the polynomial ring $$\mathbb{C}[\mathbf{x},\mathbf{y}]:=\mathbb{C}[x_1,\ldots,x_n,y_1,\ldots,y_n],$$ defined by $$\pi\cdot  f(x_1,\ldots,x_n,y_1,\ldots,y_n)=f(x_{\pi(1)},\ldots,x_{\pi(n)},y_{\pi(1)},\ldots,y_{\pi(n)}),$$ has been the subject of much study in modern algebraic combinatorics.  The ring of \textbf{diagonal coinvariants}, which we call $\DR_n$ in this paper, is the quotient of $\mathbb{C}[\mathbf{x},\mathbf{y}]$ by the ideal generated by the $S_n$-invariants with no constant term.  It arises naturally in the geometry of the Hilbert scheme of $n$ points in the plane $\CC^2$.  Haiman  \cite{HaimanDR} used this connection to prove the famous \textit{$(n+1)^{n-1}$ Conjecture}, which states that $\dim_\CC(\DR_n)=(n+1)^{n-1}$ as a complex vector space.  

Haiman used similar methods to prove the \textit{$n!$ conjecture} \cite{HaimanGeometry}.  This states that the \textit{Garsia--Haiman modules} $\DR_\mu$ (which are also quotients of $\mathbb{C}[\mathbf{x},\mathbf{y}]$ describing some of the local information of a limit as the $n$ points all approach $0$ in the plane) have dimension $n!$ for any partition $\mu$ of size $n$.  The proof of the $n!$ conjecture was the crucial step in the proof of the Macdonald Positivity Conjecture, which states that the transformed \textit{Macdonald polynomials} \cite{Macdonald} have a positive expansion in terms of the Schur symmetric functions.

Despite these advancements, it remains open to understand the $n!$ and $(n+1)^{n-1}$ conjectures from a more combinatorial standpoint, in the following sense.

\begin{open}\label{prob:nfact}
 Find an explicit basis of $n!$ polynomials for $\DR_\mu$, where $\mu$ is a partition of $n$.
\end{open}

\begin{open}\label{prob:basis-DR}
 Find an explicit basis of $(n+1)^{n-1}$ polynomials for $\DR_n$.
\end{open}

Problem \ref{prob:nfact} is open for general partitions $\mu$, while Problem \ref{prob:basis-DR} has very recently been addressed by Carlsson and Oblomkov \cite{CarlssonOblomkov}, who gave a construction of a basis for $\mathrm{DR}_n$ by establishing connections with affine Schubert calculus (though it is not a higher Specht basis as defined below). 

In this paper we explore a new approach to understanding and finding bases for these quotients under the diagonal action via the method of \textit{higher Specht polynomials}.  The standard construction of the \textit{Specht modules} $V_\lambda$, which are the irreducible representations of $S_n$ where $\lambda$ ranges over all partitions of $n$, is often presented via \textit{Young tabloids} (see \cite{Sagan} for a thorough introduction to the Young tabloid construction).  This construction is equivalent to defining a submodule of $\mathbb{C}[x_1,\ldots,x_n]$ spanned by \textit{Specht polynomials} as follows.

\begin{definition}
  The \textbf{Specht polynomial} $F_T$ corresponding to a Young tableau $T$ whose entries are $1,2,3,\ldots,n$ is given by $$F_T=\prod_{C\in\mathrm{col}(T)}\prod_{\substack{i,j\in C\\ i\text{ above }j}}(x_i-x_j)$$ where $\mathrm{col}(T)$ is the set of columns of $T$. (See Figure \ref{fig:Specht}.)
\end{definition} 

\begin{figure}[b]
    \centering
   $T=\raisebox{-0.5cm}{\young(3,716,2845)}$\hspace{2cm} $F_T=(x_3-x_7)(x_7-x_2)(x_3-x_2)\cdot (x_1-x_8)\cdot (x_6-x_4)$
    \caption{A tableau $T\in \Tab(4,3,1)$ in French notation, and the Specht polynomial $F_T$.}
    \label{fig:Specht}
\end{figure}
Writing $\Tab(\lambda)$ for the set of all fillings of the boxes of the Young diagram of $\lambda$ with $1,2,\ldots,n$ in some order, one can then define the Specht module 
as $$M_\lambda=\mathrm{span}\{F_T: T\in \Tab(\lambda)\}\subseteq \mathbb{C}[x_1,\ldots,x_n],$$ and we have $M_\lambda\cong V_\lambda$ with a basis given by the $F_T$ such that $T$ is a \textbf{standard Young tableau (SYT)}, in which the entries are increasing along rows and up columns.  The \textbf{Garnir relations}, described in Section \ref{sec:general} below, give a straightening algorithm for expressing any $F_T$ in terms of the standard Young tableau basis, and thereby gives a rule for computing with the $S_n$ action on the Specht module.

 In \cite{ATY}, Ariki, Terasoma, and Yamada noted that $F_T$ may also be defined (up to a constant) as a Young idempotent operator applied to a monomial.  In particular, define $$\varepsilon_T=\sum_{\tau \in C(T)}\sum_{\sigma \in R(T)}\sgn(\tau) \tau \sigma$$ where $C(T)\subseteq S_n$ is the group of \textit{column permutations} that preserve the columns of $T$, and $R(T)\subseteq S_n$ is the group of \textit{row permutations}. 
 Then it is not hard to check that $F_T=\varepsilon_T(x_T^r)$ where $x_T^r=\prod x_i^{\mathrm{row}(i)-1}$ with $\mathrm{row}(i)$ denoting the row that $i$ occurs in in T, indexed from bottom to top.  Ariki, Terasoma, and Yamada then generalized this construction to define \begin{equation}\label{eq:ATY} F_T^S = \varepsilon_T(x_T^S)\end{equation} where $T,S$ are a pair of standard Young tableaux of the same shape, and $x_T^S$ is a monomial whose subscripts are given by the entries of $T$ and whose exponents are determined by the \textit{cocharge} algorithm on the corresponding boxes in $S$.  Their main result was as follows.

 \begin{thm}[\cite{ATY}]
     The polynomials $F_T^S$ form a basis for the one-variable coinvariant ring $R_n=\mathbb{C}[x_1,\ldots,x_n]/(e_1,\ldots,e_n)$, where $e_i$ is the $i$-th elementary symmetric polynomial.
 \end{thm}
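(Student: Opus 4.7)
The plan is to verify three things in sequence: (i) each $F_T^S$ is homogeneous of degree $\mathrm{cocharge}(S)$; (ii) for fixed $S$ of shape $\lambda$, the family $\{F_T^S : T \in \SYT(\lambda)\}$ spans a Specht-module copy of $V_\lambda$ inside $\mathbb{C}[x_1,\ldots,x_n]$, with a basis indexed by standard $T$; and (iii) the resulting $n!$ polynomials descend to a linearly independent set in $R_n$.

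Part (i) follows immediately from the cocharge construction of $x_T^S$: the exponent of $x_i$ is the cocharge label on the box of $S$ holding the entry $i$ of $T$, and the sum of all cocharge labels on $S$ is $\mathrm{cocharge}(S)$. Part (ii) adapts the classical Specht polynomial argument of the previous section: the Young symmetrizer $\varepsilon_T$ applied to any generic-enough monomial produces an element of an irreducible $S_n$-submodule isomorphic to $V_\lambda$, and the Garnir relations together with a standard straightening and leading-monomial argument show that $\{F_T^S : T \in \SYT(\lambda)\}$ is a basis for this copy.

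For (iii), the count matches, since $\sum_{\lambda \vdash n}(f^\lambda)^2 = n! = \dim_\mathbb{C} R_n$, so it suffices to prove linear independence in $R_n$. Here the key external input is the graded Frobenius character of the coinvariant ring,
\[
\grFrob(R_n; q) = \sum_{\lambda \vdash n} s_\lambda \cdot \widetilde{K}_\lambda(q), \qquad \widetilde{K}_\lambda(q) = \sum_{S \in \SYT(\lambda)} q^{\mathrm{cocharge}(S)},
\]
which accounts, in each degree $d$, for exactly one copy of $V_\lambda$ per SYT $S$ of shape $\lambda$ and cocharge $d$. Combined with (i) and (ii), this shows that as soon as the fixed-$S$ copies are shown nonzero and mutually independent in $R_n$, they must exhaust each $V_\lambda$-isotypic component degree by degree, giving a basis.

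The main obstacle is therefore verifying that the various fixed-$S$ copies of $V_\lambda$ remain nonzero and mutually independent in the quotient. I would approach this via leading terms: choose a monomial order (e.g.\ reverse lexicographic with $x_1 > \cdots > x_n$) and verify that the leading monomial of $F_T^S = \varepsilon_T(x_T^S)$ is a specific monomial $m_{T,S}$ that survives modulo $(e_1,\ldots,e_n)$. One would then check that the collection $\{m_{T,S}\}$ is in bijection with the Artin basis $\{x_1^{a_1}\cdots x_n^{a_n} : 0 \le a_i \le n-i\}$ of $R_n$, matching bounded exponent vectors to pairs of same-shape SYTs. The technical crux — and the step I expect to be the main obstacle — is confirming that the summations of row and signed column permutations in $\varepsilon_T$ do not cancel the leading term of $x_T^S$, and that the resulting $m_{T,S}$ lies in the Artin staircase. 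This is precisely where the particular design of the cocharge-based exponent vectors in $x_T^S$ is essential, and it mirrors the leading-term combinatorics used in the ATY framework.
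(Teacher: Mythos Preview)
The paper does not contain a proof of this theorem: it is stated with attribution \cite{ATY} and treated as background, so there is no ``paper's own proof'' to compare against. What the paper does develop (in Section~\ref{sec:general}) is the general Garnir/straightening framework that yields your part~(ii) in any number of sets of variables; the one-variable linear independence in $R_n$ is taken as known from \cite{ATY}.

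As to the proposal itself: parts (i) and (ii) are fine and in the spirit of the paper's Propositions~\ref{prop:Garnir}--\ref{prop:SSYT-values}. Part (iii), however, is not a proof but a plan. You correctly identify that the entire content lies in showing the $n!$ polynomials are linearly independent modulo $(e_1,\ldots,e_n)$, and you propose a leading-term argument matching the $m_{T,S}$ to the Artin staircase basis. But you stop exactly at the hard step: you neither specify the monomial order precisely enough, nor prove that the leading monomial of $\varepsilon_T(x_T^S)$ survives the antisymmetrization (this is where the semistandardness of the cocharge tableau is used, as in the argument of Proposition~\ref{prop:SSYT-values}), nor exhibit the bijection with the staircase monomials. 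Invoking the graded Frobenius character of $R_n$ does reduce the problem to showing each fixed-$S$ copy is nonzero in $R_n$ and that distinct $S$ of the same shape and cocharge give distinct copies, but ``nonzero in $\mathbb{C}[x_1,\ldots,x_n]$'' (which is what a leading-term-in-the-polynomial-ring argument gives) is not the same as ``nonzero in $R_n$'', and the latter is exactly what remains unproved in your outline. In short, the proposal is a reasonable roadmap of the \cite{ATY} argument, but the technical core is asserted rather than carried out.
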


The importance of this basis is that it is a \textit{higher Specht basis}, which respects the decomposition into irreducible $S_n$ representations, and which we define presicely in this paper as follows.

\begin{definition}
    A \textbf{higher Specht basis} for a (graded) $S_n$ module $M$ is a basis $\mathbb{B}$ that admits a set partition $\bigsqcup \mathbb{B}_\lambda$ such that:
    \begin{enumerate}
        \item Each $\mathbb{B}_\lambda$ spans a copy of an irreducible representation $V_\lambda$ in the decomposition of $M$ into irreducibles,
        \item There is a bijection from $\mathbb{B}_\lambda$ to the set of ordinary Specht polynomials $F_T$ for shape $\lambda$, that preserves the $S_n$ action with respect to each basis.
    \end{enumerate} 
\end{definition}

 The terminology ``higher Specht'' in this setting is also due to the fact that the polynomials $F_T^S$ are in general higher-degree analogs of the ordinary Specht polynomials.

 In \cite{GillespieRhoades}, the author and Rhoades found higher Specht bases for both the modules $R_{n,k}$ (the \textit{Haglund-Rhoades-Shimozono modules} defined in the context of the $t=0$ specialization of the Delta conjecture \cite{HRS}) and $R_\mu$ (the \textit{Garsia--Procesi modules}, which are the cohomology rings of the fibers of the Springer resolution \cite{DeConciniProcesi,GarsiaProcesi,Tanisaki}).  They proved their construction was a basis in the former case and conjectured for the latter, proving it for $\mu$ having two parts.  
 
 This construction was similar to that of \cite{AllenOneVariable}, in which Allen constructed a basis that respects the decomposition into irreducibles for $R_n$, and for $R_\mu$ for $\mu$ having two parts or being a hook shape.  (Allen's bases are not higher Specht bases in the strongest sense, since the $S_n$ action on the basis was different).  In \cite{AllenDescentMonomials} and \cite{AllenBitableaux1}, Allen also began an exploration the two-variable case, focusing on the diagonally symmetric subring of the polynomial ring in two variables and its quotients.  

 In this paper, we establish a general theory for constructing higher Specht polynomials in $\mathbb{C}[\mathbf{x},\mathbf{y}]$ under the diagonal action.  In particular, we show that for certain conditions on the sequences of exponents $c=(c_1,\ldots,c_n)$ and $d=(d_1,\ldots,d_n)$, the set of polynomials $$F_T^{c,d}:=\varepsilon_T (x_1^{c_1}\cdots x_n^{c_n}y_1^{d_1}\cdots y_n^{d_n}),$$ for all standard Young tableaux $T$ of a fixed shape $\lambda$, form a higher Specht basis for a copy of the irreducible Specht module $V_\lambda$.  We then use our theory to develop our main result, a higher Specht basis for the Garsia--Haiman modules for hook shapes:

 \begin{thm}\label{thm:main}
     Suppose $\mu=(n-k+1,1^k)$ is a hook shape of height $k$ and $T,S$ denotes a pair of standard Young tableaux of the same shape $\lambda$.  Then there are modified cocharge labelings (see Section \ref{sec:hooks}) $\ccind_\mu(S)$ and $\ccind_\mu'(S)$ such that if $$\mathbf{xy}_T^S=\prod_{b\in \lambda}x^{\ccind_\mu(S)(b)}_{T(b)}y_{T(b)}^{\ccind'_\mu(S)(b)}\hspace{0.5cm}\text{ and }\hspace{0.5cm}F_T^S=\varepsilon_T(\mathbf{xy}_T^S),$$ then the set of polynomials $\{F_T^S\}$ ranging over all pairs $T,S$ of standard Young tableaux of the same shape forms a higher Specht basis for $\DR_\mu$.
 \end{thm}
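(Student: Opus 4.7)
The plan is to combine the general higher Specht framework from the earlier sections with the known bigraded Frobenius series of $\DR_\mu$ for hook shapes. I would structure the argument in three parts: (i) show each $F_T^S$ spans a copy of $V_\lambda$ in $\mathbb{C}[\mathbf{x},\mathbf{y}]$ under the Young idempotent construction, (ii) match bidegrees of the $F_T^S$ with the bigraded Frobenius $\tH_\mu(\mathbf{x};q,t)$, and (iii) verify linear independence modulo the Garsia--Haiman ideal $I_\mu$.

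For part (i), I would check that for each pair of standard Young tableaux $(T,S)$ of the same shape $\lambda \vdash n$, the exponent sequences obtained from $\ccind_\mu(S)$ and $\ccind'_\mu(S)$ (indexed through the positions prescribed by $T$) satisfy the hypotheses of the general higher Specht theorem established earlier in the paper. By that theorem, each $F_T^S$ is nonzero in $\mathbb{C}[\mathbf{x},\mathbf{y}]$, and varying $T$ over $\SYT(\lambda)$ with $S$ fixed produces a $V_\lambda$-isotypic component with the correct $S_n$ action and an equivariant bijection to the classical Specht basis.

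For part (ii), I would match the statistic-generating function
$$\sum_{S \in \SYT(\lambda)} q^{|\ccind_\mu(S)|}\, t^{|\ccind'_\mu(S)|} \;=\; \widetilde{K}_{\lambda,\mu}(q,t),$$
the modified $(q,t)$-Kostka coefficient giving the bigraded multiplicity of $V_\lambda$ in $\DR_\mu$. For hook shapes $\mu = (n-k+1,1^k)$, the modified Macdonald polynomial $\tH_\mu$ admits a compact combinatorial/plethystic expansion, and the new cocharge labelings should be engineered precisely so that the bistatistic $(|\ccind_\mu|, |\ccind'_\mu|)$ reproduces this expansion summand by summand. Summing over $\lambda$ yields $\sum_\lambda (f^\lambda)^2 = n!$ polynomials, matching $\dim \DR_\mu$, and the ungraded multiplicity $f^\lambda$ of $V_\lambda$ guaranteed by the $n!$ theorem.

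Part (iii) is the main obstacle. A natural approach is induction on the leg length $k$: the extreme cases $k=0$ and $k=n-1$ reduce to the one-variable higher Specht bases of Ariki--Terasoma--Yamada in $\mathbf{x}$ or $\mathbf{y}$ alone, and the inductive step might relate $\DR_{(n-k+1,1^k)}$ to $\DR_{(n-k+2,1^{k-1})}$ by removing the topmost leg box via a suitable short exact sequence compatible with the diagonal action. Alternatively, a leading-term argument under a block monomial order giving priority to $x$-degree could be used: one would show the initial monomials of the $F_T^S$ are pairwise distinct and avoid the initial ideal of $I_\mu$, so that they remain linearly independent in $\gr(\DR_\mu)$; combined with the bigraded count from (ii), this would complete the proof.
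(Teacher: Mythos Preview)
Your parts (i) and (ii) are essentially what the paper does: Proposition~\ref{prop:if-basis-then-specht} checks that the pair $(\ccind_\mu(S),\ccind'_\mu(S))$ fills $\lambda$ semistandardly under a suitable total order on value-pairs, so Proposition~\ref{prop:SSYT-values} and Proposition~\ref{prop:irreducible} give a copy of $V_\lambda$; and Lemma~\ref{lem:degrees} identifies $(|\ccind_\mu(S)|,|\ccind'_\mu(S)|)$ with $(\comaj_{n-k+1,n}(S),\maj_{1,n-k+1}(S))$, matching Stembridge's formula~\eqref{eq:stembridge}.

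Part (iii) is where your plan has a genuine gap. Neither of your two suggestions is carried out, and neither is close to the paper's argument. The induction on $k$ via a short exact sequence is speculative: no such sequence compatible with the higher Specht polynomials is established. The leading-term argument in $\DR_\mu$ would require knowing the initial ideal of $I_\mu$ under your block order, which is not available; and even distinctness of the leading monomials of the $F_T^S$ in $\mathbb{C}[\mathbf{x},\mathbf{y}]$ is not obvious (different $S$ of the same shape can produce the same multiset of exponent pairs).

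The paper's route is quite different and relies on two ingredients you do not mention. First, there is a substitution map $\psi:y_i\mapsto x_i^{-1}$ (Lemma~\ref{lem:x-inverse}) that, after a degree shift, sends $F_T^S(\mathbf{x},\mathbf{y})$ to the one-variable Ariki--Terasoma--Yamada polynomial $F_T^S(\mathbf{x})$; this reduces the disjointness of the various $M_\lambda^S$ in the full polynomial ring to the known one-variable case (Lemma~\ref{lem:disjoint}). Second, and crucially, independence in the quotient is not proved directly in $\DR_\mu$ at all. Instead the paper passes through the intermediate quotient $\mathcal{P}_n^{(k)}=\mathbb{C}[\mathbf{x},\mathbf{y}]/\mathcal{I}_k$ of Adin--Remmel--Roichman, where $\mathcal{I}_k$ is generated only by the monomial relations $x_iy_i$, $x_{i_1}\cdots x_{i_k}$, and $y_{j_1}\cdots y_{j_{n-k+1}}$. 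One shows (Lemmas~\ref{lem:MS}--\ref{lem:still-nonzero}) that the products $F_T^S\cdot e_\nu^{(k)}$ are nonzero and independent in $\mathcal{P}_n^{(k)}$, and then compares cardinalities with the known Adin--Remmel--Roichman basis $\{a_\pi^{(k)}e_\nu^{(k)}\}$ to conclude they form a basis of $\mathcal{P}_n^{(k)}$ (Proposition~\ref{prop:intermediate-basis}). Passing to the further quotient $\DR_\mu$ kills the $e_\nu^{(k)}$ with $\nu\neq\varnothing$, so the $F_T^S$ alone span; the degree count from part (ii) then finishes the argument. The leverage here is that nonvanishing and leading-term control are far easier modulo the monomial ideal $\mathcal{I}_k$ than modulo $I_\mu$ itself.
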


We have verified this computationally on Sage up to $n=8$ as well, and we provide the proof in Section \ref{sec:hooks}.  The proof relies on some of the structural results for hook shape Garsia--Haiman modules established in \cite{HookBases} by Adin, Remmel, and Roichman.  In particular, we multiply the $F_T^S$ constructions by certain diagonally symmetric polynomials to construct a higher Specht basis for an intermediate quotient $\mathcal{P}_n^{(k)}$ between $\mathbb{C}[\mathbf{x},\mathbf{y}]$ and $\DR_\mu$.
  The paper is organized as follows.  In Section \ref{sec:background}, we establish notational and mathematical background on higher Specht bases and the Frobenius series of $\DR_\mu$, and also provide a bijective proof of a known equidistribution result on the major index and cocharge.  This does not to our knowledge directly appear in the literature as a bijective proof, so we include it for completeness, as we will use similar methods to construct our new generalized cocharge statistics.
  
  In Section \ref{sec:general}, we build the theory of higher Specht polynomials in two sets of variables, and show that many of the constructions from one set of variables apply to any number of sets of variables.  In Section \ref{sec:hooks}, we prove Theorem \ref{thm:main}.
  
  Finally, in Section \ref{sec:future}, we outline several possible directions for exploration that this paper opens up.  We also give example higher Specht constructions in special cases for the diagonal coinvariant rings, in particular for $\DR_2$ and $\DR_3$.

\subsection{Acknowledgments}

We thank Kelvin Chan, Bryan Gillespie, Sean Griffin, Jake Levinson, and Brendon Rhoades for helpful mathematical conversations pertaining to this project.  Special thanks to Jake Levinson for his support and encouragement that made it possible.

\section{Definitions and background}\label{sec:background}

\subsection{Tableaux and statistics}

The \textbf{Young diagram} of a partition $\lambda=(\lambda_1,\ldots,\lambda_k)$, written so that $\lambda_1\ge \lambda_2\ge \cdots$, is the left-justified partial grid of squares having $\lambda_i$ squares in the $i$-th row from the bottom (we use the ``French'' convention for diagrams in this paper).  We write $n=|\lambda|$ for the number of squares of $\lambda$, and $\ell(\lambda)=k$.

A \textbf{semistandard Young tableau}, or SSYT, of shape $\lambda$ is a way of filling each square of $\lambda$ with a positive integer such that the rows are weakly increasing left to right and the columns are strictly increasing from bottom to top.  In general we will also allow entries from any totally ordered set (not necessarily the positive integers), with the same definition.   For an SSYT $T$, we write $\sh(T)=\lambda$ if $T$ has shape $\lambda$.

The \textbf{reading word} of any tableau is the word formed by concatenating the rows from top to bottom (and this ordering of the squares is called \textbf{reading order}).  For instance, the reading word of the tableau at middle in Figure \ref{fig:tableaux} is $748251369$.

The \textbf{content} of a tableau or word is the tuple $(m_1,m_2,\ldots)$ where $m_i$ is the number of times $i$ appears in the tableau or word.  For instance, the tableau at left in Figure \ref{fig:tableaux} has content $(3,2,2,1,1)$.

An SSYT is \textbf{standard} if each of the entries $1,2,\ldots,n$ is used exactly once (where $n=|\lambda|$).  We write $\SYT(\lambda)$ for the set of standard Young tableaux of shape $\lambda$.  The \textbf{standardization} of an SSYT is the standard Young tableau formed by labeling the boxes $1,2,3,\ldots,n$ in a way that respects the ordering of the labels in the SSYT and breaks ties in reading order.  It is straightforward to verify that the standardization of an SSYT is an SYT.

We recall the well-known fact that permutations of $1,\ldots,n$ are in bijection with pairs $(P,Q)$ of SYT's with the same shape as each other, via the \textbf{RSK bijection}.  Thus there are $n!$ such pairs.  In particular, we will need the \textbf{RSK insertion} algorithm: given a word $w$, we define its \textbf{insertion tableau} $T$ by starting with the empty tableau, and inserting $w_1,w_2,\ldots$ inductively in the following manner.  Given a semistandard young tableau $S$, we insert the letter $a$ into the bottom row $r$, by either placing it at the end of row $r$ if it is greater than every element of $r$, or replacing entry $b$ of $r$ by $a$ where $b$ is the leftmost entry greater than $a$.  Then we insert $b$ into the second row in the same manner, and so on.  (See \cite{Fulton} for details and examples.)

A \textbf{standard general tableau} $T$ of shape $\lambda$ is a way of filling the squares of $\lambda$ with the numbers $1,2,\ldots,n$ each used once, with no restrictions on the rows or columns.  We write $\Tab(\lambda)$ for the set of all standard general tableaux of shape $\lambda$, and note that $|\Tab(\lambda)|=n!$.  (See Figure \ref{fig:tableaux}.)

\begin{figure}
    \centering
    \young(5,34,22,1113) \hspace{1cm} \young(7,48,25,1369) \hspace{1cm} \young(7,14,62,3985)
    \caption{From left to right: A semistandard Young tableau, a standard Young tableau, and a standard general tableau.} 
    \label{fig:tableaux}
\end{figure}

\begin{definition}
   A \textbf{descent} of a standard Young tableau $T$ is an entry $i$ such that $i+1$ occurs weakly to the left of (and necessarily above) $i$ in $T$.  We write $\Des(T)$ for the set of all descents of $T$ and $\des(T)$ for the number of descents.
   
   The \textbf{major index} of a standard Young tableau $T$ is defined as the sum of the descents: $$\maj(T)=\sum_{i\in \Des(T)} i.$$
\end{definition}

\begin{example}
    The middle tableau in Figure \ref{fig:tableaux} has descents $1$, $3$, and $6$.  Therefore, its major index is $1+3+6=10$.
\end{example}

It is well-known (due to Lusztig and Stanley \cite{Stanley}) that the number of copies of the irreducible representation $V_\lambda$ of $S_n$ in the coinvariant ring $R_n$ in degree $d$ is equal to the number of standard Young tableaux of shape $\lambda$ with major index $d$.  We will express this fact in terms of the graded Frobenius character in Section \ref{sec:Frobenius}. 

Lascoux and Sch\"utzenberger, in \cite{LascouxSchutzenberger} generalized the major index to a statistic on semistandard Young tableaux called \textbf{cocharge}.  They used it to obtain the graded decomposition of the Garsia--Procesi modules $R_\mu$ into irreducible $S_n$-modules.  We recall its definition here.

\begin{definition}
    The \textbf{cocharge labels} of a permutation $\pi$ are defined by labeling the $1$ with a subscript $0$, then searching leftwards cyclically for the $2,3,4,\ldots$, each time incrementing the subscript label unless the search wraps around the word.  The \textbf{cocharge} of $\pi$, written $\cc(\pi)$, is the sum of its cocharge labels.
\end{definition}

\begin{example}
    The permutation $25314$ has cocharge labels $2_15_23_11_04_1$, so $$\cc(25314)=1+2+1+0+1=5.$$

\end{example}

\begin{definition}
    For a word $w$ with partition content, the \textbf{first cocharge subword} $w^{(1)}$ is formed by searching from right to left for a $1$, then continuing left to search for a $2$, and so on, wrapping around cyclically if need be, and terminating at the largest entry.  The second cocharge subword $w^{(2)}$ is formed by repeating this process on the remaining letters, and so on.  Then its \textbf{cocharge} is $$\cc(w):=\sum_i \cc(w^{(i)}).$$
    Finally, the \textbf{cocharge} of a tableau is defined to be the cocharge of its reading word.
\end{definition}

\begin{example}
    We compute $\cc(433111222442311)$, by first labeling the first cocharge subword:
\[\mathbf{4}_3
{\color{gray}
3_{\phantom{1}}
}
\mathbf{3}_2
{\color{gray}
1_{\phantom{1}
}
1_{\phantom{1}}
1_{\phantom{1}}
2_{\phantom{1}}
2_{\phantom{1}}
2_{\phantom{1}}
4_{\phantom{1}}
4_{\phantom{1}}}
\mathbf{2}_1
{\color{gray}
3_{\phantom{1}}
1_{\phantom{1}}
}
\mathbf{1}_0.
\]
followed by the second:
\[
{\color{gray}
4_3
}
\mathbf{3}_2
{\color{gray}
3_2
1_{\phantom{1}}
1_{\phantom{1}}
1_{\phantom{1}}
2_{\phantom{1}}
2_{\phantom{1}}
}
\mathbf{2}_{1}
{\color{gray}
4_{\phantom{1}}
}
\mathbf{4}_{2}
{\color{gray}
2_1
3_{\phantom{1}}
}
{\bf 1}_{0}
{\color{gray}
1_0.
}
\]
and so on until we obtain the full labeling \[
4_3
3_2
3_2
1_0
1_0
1_0
2_0
2_{0}
2_{1}
4_{1}
4_{2}
2_1
3_0
1_0
1_0.
\]
Thus the cocharge is the sum of the subscripts, which is 12.    
\end{example}

It is known \cite{LascouxSchutzenberger} that the RSK insertion tableau of $w$ has the same cocharge as $w$.  In particular, cocharge is preserved under \textbf{Knuth equivalence} of words (which is also preserved by RSK), defined as the reachability of one word from another via \textit{elementary Knuth moves} of the form $$bac \leftrightarrow bca \hspace{1cm} \text{or} \hspace{1cm} acb\leftrightarrow cab$$
where $a< b\le c$ or $a\le b<c$ respectively, and the three entries are consecutive in the word.
 
\subsection{A weight-preserving bijection}

We now give a bijection $\phi:SYT(\lambda)\to \SYT(\lambda)$ (for any shape $\lambda$) that sends $\cc$ to $\maj$ and preserves the number of descents, and that will come up in our work on hook shapes in Section \ref{sec:hooks}.  We will require the following two operations on words.

\begin{definition}
    The \textbf{flip} of a permutation $\pi\in S_n$, written $\flip(\pi)$, is the permutation formed by replacing $\pi_i$ with $n+1-\pi_i$ for each entry $\pi_i$.

    The \textbf{reverse} of $\pi$, written $\rev(\pi)$, is formed by writing $\pi$ backwards: $\pi_n\pi_{n-1}\cdots \pi_1$.
\end{definition}

\begin{example}
    The flip of $836791245$ is $274319865$.  The reverse of the latter is $568913472$.
\end{example}

\begin{definition}
Given an SYT $T$, let $w$ be its reading word.  Apply the RSK insertion algorithm to $\rev(\flip(w))$.  We define $\phi(T)$ to be the resulting tableau.
\end{definition}

\begin{example}
Let $T$ be the following tableau: $$\young(8,3679,1245)$$ Its reading word is $836791245$.  Applying flip and reverse yields $568913472$, whose RSK insertion tableau is $\phi(T)$, shown below at left with its tableau of cocharge labels shown at right:
$$\young(5,3689,1247) \hspace{2cm} \young(2,1233,0012)$$
Notice that the descents of $T$ are $2,5,7$ and the major index is $2+5+7=14$.  The descents of $\phi(T)$ are $2,4,7$, and its cocharge is the sum of the cocharge labels, which is $14$.
\end{example}

\begin{lemma}
   The bijection $\phi$ is shape-preserving: $\sh(T)=\sh(\phi(T))$.
\end{lemma}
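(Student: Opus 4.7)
The plan is to reduce the shape-preservation of $\phi$ to two classical properties of the RSK correspondence. First, for any standard Young tableau $T$ with reading word $w$, the RSK insertion tableau $P(w)$ recovers $T$ itself; in particular $\sh(P(w)) = \sh(T) = \lambda$. Second, both the $\flip$ and the $\rev$ operations transpose the RSK insertion shape: for every permutation $u$,
$$\sh(P(\rev(u))) = \sh(P(u))^T \quad\text{and}\quad \sh(P(\flip(u))) = \sh(P(u))^T.$$
Since $\flip$ and $\rev$ commute (they act on entries and positions respectively), combining these gives
$$\sh(\phi(T)) = \sh(P(\rev(\flip(w)))) = \sh(P(\flip(w)))^T = (\sh(P(w))^T)^T = \lambda,$$
which is what we want.

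The reading-word property is a direct unpacking of row insertion: inserting the topmost (shortest, in French convention) row first, then the next row, and so on, rebuilds $T$ row by row. For the shape behavior under $\rev$ and $\flip$, the cleanest route is Greene's theorem, which states that $\sh(P(u))_1 + \cdots + \sh(P(u))_k$ equals the maximum size of a union of $k$ disjoint increasing subsequences of $u$, while the analogous sum for $\sh(P(u))^T$ records $k$ disjoint decreasing subsequences. The operation $\rev$ exchanges increasing and decreasing subsequences; and $\flip$ does the same, since a subsequence $a_1 < \cdots < a_m$ in positions $i_1 < \cdots < i_m$ becomes $n+1-a_1 > \cdots > n+1-a_m$ in the same positions. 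Hence each operation transposes the shape, and the composition preserves it.

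Finally, $\phi(T)$ is automatically a standard Young tableau since it is the RSK insertion tableau of a permutation, so no additional check is needed there. There is no real obstacle in this lemma: the argument is a short chain of standard RSK facts, with the only mild care being to use the French-convention reading word consistently. The deeper content of the bijection $\phi$, namely that it sends cocharge to major index while preserving the descent count, will require substantially more work involving Schützenberger-style analysis; but shape-preservation itself falls out essentially for free.
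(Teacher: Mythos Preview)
Your proof is correct and follows essentially the same route as the paper's: both arguments use the Greene-theorem characterization of the RSK shape by maximal unions of increasing (resp.\ decreasing) subsequences, and observe that $\rev$ and $\flip$ each interchange increasing with decreasing, so their composition preserves the shape. Your write-up is slightly more explicit in isolating the intermediate transposition after each operation and in noting that $P(w)=T$ for the reading word, but the underlying argument is the same.
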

\begin{proof}
The shape of an RSK insertion is determined by the length of the longest increasing subsequence \cite{Fulton}, and then the longest pair of disjoint increasing subsequences, and so on.  These statistics are preserved by performing $\rev$ and $\flip$ in succession (each of which simply switch increasing to decreasing).      
\end{proof}

\begin{prop}
We have $\cc(\phi(T))=\maj(T)$ for any standard Young tableau $T$.    
\end{prop}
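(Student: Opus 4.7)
The plan is to unfold the definition of $\phi$ and reduce the claim to a short chain of classical identities on permutations. Writing $w$ for the reading word of $T$, the Knuth invariance of $\cc$ together with the fact that any word is Knuth equivalent to the reading word of its RSK insertion tableau give
$$\cc(\phi(T)) = \cc(\rev(\flip(w))),$$
so it suffices to show $\cc(\rev(\flip(w))) = \maj(T)$.

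The heart of the argument is a general formula $\cc(\pi) = \comaj(\pi^{-1})$ for any $\pi \in S_n$, where $\comaj(\sigma) := \sum_{i \in \Des(\sigma)}(n - i)$. I would prove this by directly unpacking the cocharge algorithm: going from $j$ to $j{+}1$, the label increments precisely when $j{+}1$ lies to the left of $j$ in $\pi$, which is exactly the condition that $j$ is a descent of $\pi^{-1}$. Summing over $j$ gives $\cc(\pi) = \sum_{j \in \Des(\pi^{-1})} (n-j)$. Next, viewing $\rev \circ \flip$ as conjugation $\pi \mapsto w_0 \pi w_0$ by the long element $w_0 \in S_n$ (using that $\flip(\pi) = w_0 \pi$ and $\rev(\pi) = \pi w_0$), this operation commutes with inversion, so $(\rev(\flip(w)))^{-1} = \rev(\flip(w^{-1}))$. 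A direct check shows $\rev \circ \flip$ sends a descent of $\pi$ at position $j$ to a descent of $\rev(\flip(\pi))$ at position $n - j$, giving $\comaj(\rev(\flip(\pi))) = \maj(\pi)$. Chaining these,
$$\cc(\rev(\flip(w))) = \comaj(\rev(\flip(w^{-1}))) = \maj(w^{-1}).$$

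Finally, I would invoke the classical identity $\Des(T) = \Des(w^{-1})$ for the reading word $w$ of an SYT $T$: a descent of $w^{-1}$ at position $k$ says that $k{+}1$ appears before $k$ in $w$, which in the top-to-bottom French reading order of the paper is exactly the condition that $k{+}1$ lies in a strictly higher row than $k$. This yields $\maj(w^{-1}) = \maj(T)$ and completes the proof. The main obstacle I expect is purely bookkeeping: tracking orientations consistently across $\rev$, $\flip$, inversion, and the paper's ``increment unless the search wraps'' convention for the cocharge labels, where any sign or direction can easily be flipped in intermediate steps if one is not careful.
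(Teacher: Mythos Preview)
Your proof is correct and follows essentially the same route as the paper: both start from Knuth invariance to reduce to $\cc(\rev(\flip(w)))$, then track when the cocharge label increments and identify those increments with descents of $T$, each contributing the correct summand to $\maj(T)$.

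The packaging differs slightly. The paper works directly: it rewrites cocharge of $\rev(\flip(w))$ as charge of $\flip(w)$, observes that each rightward step from $j$ to $j{+}1$ in the charge labeling contributes $n-j$, and checks that such steps correspond under $\flip$ to descents $d=n-j$ of the original reading word. You instead isolate reusable lemmas: the identity $\cc(\pi)=\comaj(\pi^{-1})$, the fact that $\rev\circ\flip$ is conjugation by the long element $w_0$ (hence commutes with inversion), the symmetry $\comaj(w_0\pi w_0)=\maj(\pi)$, and finally $\Des(T)=\Des(w^{-1})$. This modular organization is cleaner and makes each step a standard permutation-statistics fact, at the cost of introducing $w^{-1}$ where the paper never names it. One small point worth making explicit in your write-up: your last step asserts that ``$k{+}1$ before $k$ in the reading word'' is equivalent to ``$k{+}1$ in a strictly higher row,'' which matches the paper's descent definition (``weakly to the left and above'') because in an SYT, $k{+}1$ in a higher row is automatically weakly to the left; this is a one-line box argument but should be stated.
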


\begin{proof}
Let $w$ be the reverse of the flip of the reading word of $T$. Then $\cc(\phi(T))=\cc(w)$ since RSK insertion preserves cocharge.  

The cocharge subscripts on $w$ may be alternatively defined by labeling the $1$ with $0$, and then looking for $2,3,4,\ldots$ and whenever the next number is to the left of the previous, we increment the subscript, and if it is to the right, we do not increment, for instance:
$$5_2 6_2 8_3 9_3 1_0 3_1 4_1 7_2 2_0$$

This is the same as the \textit{charge} of the reversed word of $w$ (which is the flip of the reading word of $T$), in which we increment if we move to the right and do not otherwise:
$$2_0 7_2 4_1 3_1 1_0 9_3 8_3 6_2 5_2$$

The sum of the charge labels can be computed as follows: for all numbers after the first rightward move, we add $1$.  Then we add $1$ for all numbers after the next rightward move, and so on.  If we moved rightwards from $j$ to $j+1$, this means we added $n-(j-1)=n+1-j$ to the total charge for each such move.  The rightward moves in the flipped word correspond to the descents in the original word (in the running example, $836791245$), and those descents are precisely the values $n+1-j$ by the definition of the flip bijection.  So we have shown that $\cc(\rev(\flip(w))=\maj(w)$.  This completes the proof. 
\end{proof}

\begin{lemma}
    The map $\phi$ preserves the number of descents of the tableau.
\end{lemma}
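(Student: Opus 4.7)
The plan is to invoke Schensted's classical descent theorem, which asserts that for any permutation $\pi$ of $\{1,\ldots,n\}$, the descent set of the RSK insertion tableau $P(\pi)$ equals the descent set of the inverse permutation $\pi^{-1}$.  First I would observe that any standard Young tableau $T$ is the insertion tableau of its own reading word $w$ (viewed as a permutation), so Schensted gives $\Des(T)=\Des(w^{-1})$.  By definition of $\phi$, the same theorem applied to $\tilde{w}:=\rev(\flip(w))$ yields $\Des(\phi(T))=\Des(\tilde{w}^{-1})$.  It therefore suffices to show that $|\Des(w^{-1})|=|\Des(\tilde{w}^{-1})|$.

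Next I would unpack the definitions to compute the effect of $\rev\circ\flip$ on the inverse permutation.  Directly, $\tilde{w}(i)=n+1-w(n+1-i)$, and inverting yields $\tilde{w}^{-1}(k)=n+1-w^{-1}(n+1-k)$.  From this, a one-line calculation shows that $i\in \Des(\tilde{w}^{-1})$ if and only if $w^{-1}(n-i)>w^{-1}(n+1-i)$, i.e., if and only if $n-i\in \Des(w^{-1})$.  Hence $j\mapsto n-j$ restricts to a bijection $\Des(w^{-1})\to \Des(\tilde{w}^{-1})$, and in particular the two descent sets have equal cardinality.  Combined with the previous step this gives $|\Des(\phi(T))|=|\Des(T)|$, as required.

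There is no real obstacle once Schensted's theorem is applied; everything else is a routine manipulation with permutations and their inverses.  Alternatively, one could recognize $\phi$ as Schützenberger's \emph{evacuation} involution on SYT---a standard RSK characterization of evacuation is $T\mapsto P(\rev(\flip(w)))$, which matches $\phi$ verbatim---and then the lemma reduces to the well-known property that evacuation sends $\Des(T)$ bijectively to $\{n-i:i\in \Des(T)\}$.  Either route yields the descent-preserving claim with essentially no additional work beyond citing a classical identity.
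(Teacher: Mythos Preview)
Your proof is correct and follows essentially the same strategy as the paper's: both arguments first use that RSK insertion preserves the (inverse) descent set---you cite Schensted's theorem, the paper phrases it as ``$i$ and $i+1$ cannot switch positions under any Knuth move''---and then verify that $\rev\circ\flip$ sends inverse descents $i$ to inverse descents $n-i$.  Your version makes the bijection $\Des(w^{-1})\to\Des(\tilde w^{-1})$ explicit via the formula $\tilde w^{-1}(k)=n+1-w^{-1}(n+1-k)$, whereas the paper argues more informally by tracking ``descent pairs'' through the two word operations; the content is the same.
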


\begin{proof}
    First, note that the number of descents is preserved under RSK because $i$ and $i+1$ cannot switch positions under any Knuth move in a permutation (see the definition of Knuth equivalence above, or from \cite{Fulton}).

Then, the descent pairs become ascents under $\flip$, which then become descents again under $\rev$.  This completes the proof.
\end{proof}

\subsection{Symmetric functions and graded Frobenius series}\label{sec:Frobenius}

The two bases of the ring of symmetric functions $$\Lambda_{\mathbb{C}}(x_1,x_2,\ldots)=\lim_{\leftarrow} \Lambda_{\mathbb{C}}(x_1,x_2,\ldots,x_n)=\lim_{\leftarrow}\mathbb{C}[x_1,\ldots,x_n]^{S_n}$$ that will be used in this paper are the elementary and Schur symmetric functions.  The elementary are defined as $e_d(x_1,x_2,\ldots)=\sum_{i_1<i_2<\cdots < i_d} x_{i_1}\cdots x_{i_d}$ and $e_\lambda=\prod e_{\lambda_i}$, and we can restrict these to $n$ variables as well to define the elementary symmetric polynomials.  The Schur functions can be defined by the combinatorial formula $s_\lambda=\sum_{T\in \SSYT(\lambda)}\prod_{c \in \lambda} x_{T(c)}$.

The Frobenius map encodes an $S_n$ representation as a Schur positive symmetric function.  In particular, $\Frob$ is the additive map on representations that sends the irreducible $S_n$-module $V_\nu$ to the Schur function $s_\nu$.  For instance, $\Frob(V_{(2)}\oplus V_{(1,1)}\oplus V_{(1,1)})=s_{(2)}+2s_{(1,1)}$.

For $S_n$-modules with a grading (such as the coinvariant ring, which is graded by degree), the \textbf{graded Frobenius map} is the generating function $$\grFrob_q(R):=\sum_d \Frob(R_d)q^d$$ where $R_d$ is the $d$-th graded piece and $q$ is a formal variable.

In this paper, we are working with doubly graded $S_n$-modules (by the $x$-degree and $y$-degree), and so we will be using the \textbf{bi-graded Frobenius map} $$\grFrob_{q,t}(R):=\sum_d \Frob(R_{d_1,d_2})q^{d_1}t^{d_2}.$$

The theorem of Lusztig and Stanley \cite{Stanley} mentioned above on the decomposition of $R_n$ into irreducibles can therefore be stated as follows:
$$\grFrob_q(R_n)=\sum_{T\in \SYT(n)} q^{\maj(T)}s_{\sh(T)}$$
which is also equal to $\sum_{T\in \SYT(n)} q^{\cc(T)}s_{\sh(T)}$ by the above discussion. The result of Lascoux and Sch\"utzenberger \cite{LascouxSchutzenberger} is that $$\grFrob_q(R_\mu)=\sum_{T\in \SSYT_\mu} q^{\cc(T)}s_{\sh(T)}$$ where $\SSYT_\mu$ is the set of SSYT's of content $\mu$.  

\subsection{Diagonal coinvariants and Garsia--Haiman modules}\label{sec:GH}

We work in the polynomial ring $\CC[{\bf x}, {\bf y}]=\CC[x_1,\ldots,x_n,y_1,\ldots,y_n]$ in two sets of variables.  The \textbf{diagonal action} of the symmetric group $S_n$ on $\CC[\bf x,\bf y]$ is given by permuting the variables: $$\pi f(x_1,\ldots,x_n,y_1,\ldots,y_n)=f(x_{\pi(1)},\ldots,x_{\pi(n)},y_{\pi(1)},\ldots, y_{\pi(n)}).$$

\begin{definition}
  The ring of \textbf{diagonal coinvariants} is given by $$\DR_n=\CC[{\bf x}, {\bf y}]/I_n$$ where $I_n$ is the ideal generated by the positive-degree $S_n$-invariants under the diagonal action.
\end{definition}

\begin{remark}
We use $\DR_n$ rather than $R_n$ here to distinguish the diagonal case from the one-variable coinvariant ring $R_n=\CC[x_1,\ldots,x_n]/(e_1,\ldots,e_n)$.  We similarly use $\DR_\mu$ rather than $R_\mu$ below for the Garsia--Haiman modules to distinguish them from the Garsia--Procesi modules $R_\mu$.  
\end{remark}

The \textbf{Garsia--Haiman modules} are quotients of $\DR_n$ defined as follows.  We use the differential inner product on $\CC[\mathbf{x},\mathbf{y}]$ that defines $\langle f, g \rangle$ to be the constant term of $\partial f(g)$, where $\partial f$ is the polynomial differential operator formed by changing all $x_i$'s and $y_i$'s to $\frac{\partial}{\partial x_i}$ and $\frac{\partial}{\partial y_i}$ respectively.

\begin{definition}
 For a partition $\mu$ of $n$, let the squares in its Young diagram have coordinates $(a_1,b_1),\ldots,(a_n,b_n)$ in some order, where we start with the lower leftmost square having coordinates $(1,1)$. 
 
 Define $\Delta_\mu=\det M_\mu$ where $M_\mu$ is the matrix whose $(i,j)$ entry is $x_i^{a_j}y_i^{b_j}$.
\end{definition}

\begin{definition}
 Define $I_\mu$ to be the ideal of all polynomials $f\in \mathbb{C}[{\bf x},{\bf y}]$ such that $\partial f(\Delta_\mu)=0$.  The \textbf{Garsia--Haiman module} with index $\mu$ is $$\DR_\mu =\CC[{\bf x},{\bf y}]/I_\mu.$$
\end{definition}

For the case when $\mu$ is a \textbf{hook shape} of the form $(n-k+1,1^{k-1})$, several monomial bases were established in \cite{HookBases}.  The graded Frobenius was derived by Stembridge \cite{Stembridge}, and makes use of the following statistics.

\begin{definition}\label{def:maj-comaj}
    For a standard tableau $T$ of size $n$, define $\maj_{i,j}(T)$ to be the sum of the descents $d$ of $T$ such that $i\le d<j$.

    Define $\comaj_{i,j}(T)$ to be the sum of the values $n-d$ over all descents $d$ such that $i\le d <j$.
\end{definition}

Then for $\mu=(n-k+1,1^{k-1})$, we have \cite{Stembridge} 
\begin{equation}\label{eq:stembridge}\grFrob_{q,t}(\DR_\mu)=\sum_{T\in \SYT(n)} q^{\maj_{1,n-k+1}(T)}t^{\comaj_{n-k+1,n}(T)}s_{\sh(T)}.\end{equation}

\subsection{Young symmetrizers and one-variable higher Specht polynomials}

We now recall the construction of the one-variable higher Specht basis for the coinvariant ring $R_n$.  Working in the group algebra $\QQ[S_n]$, define the \textbf{Young idempotent} $\varepsilon_T \in \QQ[S_n]$ by $$\varepsilon_T=\sum_{\tau \in C(T)}\sum_{\sigma \in R(T)}\sgn(\tau) \tau \sigma$$ where $C(T)\subseteq S_n$ is the group of \textit{column permutations}  of $T$ (those that send every number to another number in its column in $T$), and $R(T)\subseteq S_n$ is the group generated by row permutations.

Define the Young (anti)symmetrizers $\alpha$ and $\beta$ as follows.  For any subgroup $U\subseteq S_n$, define $$\alpha(U)=\sum_{\tau \in U}\sgn(\tau)\tau\hspace{0.5cm}\text{ and }  \hspace{0.5cm}\beta(U)=\sum_{\sigma\in U}\sigma.$$  In this notation, the Young idempotent $\varepsilon_T$ can be written as $$\varepsilon_T=\alpha(C(T))\beta(R(T)).$$

Notice that the ordinary Specht polynomials $F_T$ defined in the introduction may alternatively be defined as follows.  Let $$x_T=\prod_{i=1}^n x_i^{r_i-1}$$ where $r_i$ is in the row in which the letter $i$ appears in $T$ (with the bottom row being row $1$).  Then $$\varepsilon_T x_T=\sum_{\sigma\in R(T)}\sum_{\tau\in C(T)}\sgn(\tau)\tau\sigma x_T=|R(T)|\sum_{\tau\in C(T)}\sgn(\tau)=|R(T)|F_T,$$ so the polynomials $\varepsilon_T x_T$ are scalar multiples of the $F_T$ basis and hence form a basis themselves.  

As mentioned in the introduction, the one-variable higher Specht polynomials constructed in \cite{ATY} are constructed in a similar manner.  Let $(T,S)$ be a pair of standard Young tableau of the same shape $\lambda$, and let $\ccind(S)$ be the tableau consisting of the cocharge subscripts of the reading word of $S$, written in the corresponding squares of the diagram of $\lambda$.  Define $$x_T^S=\prod_c x_{T(c)}^{\ccind(S)(c)}$$ where the product is over all squares $c$ in the diagram of $\lambda$.   Then $F_T^S$ is defined as $\varepsilon_T x_T^S$, and the set of polynomials $F_T^S$ is the higher Specht basis for $R_n$.

\section{General constructions in two sets of variables}\label{sec:general}

We begin by generalizing some of the basic theory of higher Specht polynomials (as established in \cite{AllenOneVariable}, \cite{ATY}, and \cite{GillespieRhoades}) to the multivariate setting.  In this section we are working in the polynomial ring $\mathbb{C}[\mathbf{x},\mathbf{y}]$.  

For a tableau $T\in \Tab(\lambda)$, fix an ordering on the squares of $\lambda$ and write $T_1,T_2,\ldots,T_n$ to denote the values of $T$ in each of those squares.  Also, for any tuples $c=(c_1,\ldots,c_n)$ and $d=(d_1,\ldots,d_n)$ of nonnegative integers, we write $$x_T^c=x_{T_1}^{c_1} x_{T_2}^{c_2}\cdots x_{T_n}^{c_n}\qquad\text{ and }\qquad y_T^d=y_{T_1}^{d_1} y_{T_2}^{d_2}\cdots y_{T_n}^{d_n}.$$

\begin{definition}
We write $$F_T^{c,d}=\varepsilon_T (x_T^cy_T^d).$$
\end{definition}

\begin{remark}
    All proofs in this section  hold for any number of variables; for instance, we could have three sets of variables $\mathbf{x},\mathbf{y},\mathbf{z}$ and three exponent sequences $c,d,e$ and define $F_T^{c,d,e}=\varepsilon_T (\mathbf{x}^c\mathbf{y}^d\mathbf{z}^e)$ and we would obtain analogous results.
\end{remark}

We will first show the general statement that, assuming the polynomials $F_T^{c,d}$ are independent for $T$ standard of shape $\lambda$, the submodule $V^{c,d}\subseteq \mathbb{C}[\mathbf{x},\mathbf{y}]$ spanned by these polynomials is a copy of the irreducible $S_n$-module $V_\lambda$.  We first recall (see, for instance, \cite{Peel}) the Garnir relations that govern the $S_n$-module structure of $V_\lambda$ with respect to the standard Specht basis.

\begin{definition}
  Let $T\in \Tab(\lambda)$.  Let $a$ and $b$, with $a<b$, be the indices of two distinct columns of $T$, and let $t\le \lambda'_b$ be a row index of one of the entries of column $b$.  Let $S^{a,b}_t$ be the subgroup of $S_n$ consisting of all permutations of the set of elements of $T$ residing either in column $a$ weakly above $t$, or in column $b$ weakly below $t$. 
  
  The \textbf{Garnir element} $G^{a,b}_t$ is the partial antisymmetrizer $$G^{a,b}_t:=\sum_{\omega\in S^{a,b}_t} \mathrm{sgn}(\omega)\omega.$$
\end{definition}

The following lemma states that Lemma 3.3 in \cite{Peel} (or equivalently Lemma 3.13 in \cite{GillespieRhoades}) generalizes to this setting.

\begin{lemma}\label{lem:alpha}
  Let $U$ be any subgroup of $S_n$ and let $C=C(T)$ where $T\in \Tab(\lambda)$.  Suppose there is an involution $\sigma\mapsto \sigma'$ on $UC$ such that for each $\sigma\in UC$, there exists $\rho_\sigma\in R(T)$ for which $\rho_\sigma^2=1$, $\sgn(\rho_\sigma)=-1$, and $\sigma'=\sigma\rho_\sigma$.  Then $$\alpha(U) F_T^{c,d}=0.$$
\end{lemma}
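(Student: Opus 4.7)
The plan is to expand $\alpha(U) F_T^{c,d}$ into a signed sum of group elements acting on the monomial $x_T^c y_T^d$, and then use the hypothesized involution to pair up these terms and show they cancel. The monomial $x_T^c y_T^d$ plays essentially no role beyond being acted upon; what will matter is the left-$R(T)$-invariance provided by the factor $\beta(R(T))$ sitting between the column antisymmetrizer and the monomial, so the argument will go through in exactly the same way as in the one-variable version.

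More concretely, I would write
$$\alpha(U) F_T^{c,d} = \alpha(U)\alpha(C)\beta(R(T))(x_T^c y_T^d) = \sum_{u\in U,\, \tau\in C}\sgn(u\tau)\, u\tau\, \beta(R(T))(x_T^c y_T^d),$$
and then invoke the involution $\sigma \mapsto \sigma' = \sigma\rho_\sigma$ on $UC$ to pair terms in this sum. The crucial observation is that because $\rho_\sigma \in R(T)$, the factor $\rho_\sigma$ is absorbed by $\beta(R(T))$ from the left of that symmetrizer, yielding
$$\sigma'\,\beta(R(T))(x_T^c y_T^d) = \sigma\rho_\sigma\,\beta(R(T))(x_T^c y_T^d) = \sigma\,\beta(R(T))(x_T^c y_T^d).$$
Combined with $\sgn(\sigma') = -\sgn(\sigma)$, this shows that the contributions of $\sigma$ and $\sigma'$ cancel, and summing over all involution pairs gives $\alpha(U) F_T^{c,d} = 0$.

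The main point of care will be the bookkeeping of the indexing: the raw sum is over ordered pairs $(u,\tau) \in U \times C$, whereas the involution is phrased on $UC$. I would need to verify that the hypothesized pairing on $UC$ lifts consistently to a pairing of ordered pairs with compatible multiplicities so that no term is left unmatched. This is precisely what is handled in the one-variable setting of \cite{Peel} (see also Lemma 3.13 of \cite{GillespieRhoades}), and it transports verbatim to the present multi-variable setting, since neither the exponent sequences $c, d$ nor the number of variable sets plays any role beyond providing the polynomial input to $\beta(R(T))$.
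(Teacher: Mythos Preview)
Your proposal is correct and follows essentially the same approach as the paper: the paper's own proof simply refers to Lemma 3.13 of \cite{GillespieRhoades} and observes that the argument there depends only on $F_T^{c,d}$ being $\varepsilon_T$ applied to a monomial, not on the specific monomial. Your write-up is a faithful sketch of that referenced argument together with exactly that observation, including the correct handling of the $U\times C$ versus $UC$ multiplicity bookkeeping.
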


For a proof of this lemma, we simply refer to the proof of Lemma 3.13 in \cite{GillespieRhoades}, and note that the proof only depended on the fact that the polynomials $F_T^S$ were defined as $\varepsilon_T$ applied to a monomial; the specific definition of the monomial did not matter.

\begin{prop}\label{prop:Garnir}
We have $\pi F_T^{c,d} = F_{\pi T}^{c,d}$ for any $\pi\in S_n$, and the $F_T^{c,d}$ elements satisfy the Garnir relations  $G^{a,b}_{t}(F_T^{c,d})=0$.
\end{prop}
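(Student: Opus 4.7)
The plan is to establish the two claims separately, each by reducing to a combination of existing ingredients.

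For the equivariance $\pi F_T^{c,d} = F_{\pi T}^{c,d}$, I would perform a direct computation. Two identities suffice. First, $\varepsilon_{\pi T} = \pi \varepsilon_T \pi^{-1}$: this follows from $C(\pi T) = \pi C(T)\pi^{-1}$ and $R(\pi T) = \pi R(T)\pi^{-1}$ (since column/row stabilizers of $\pi T$ are obtained by conjugating those of $T$) together with the fact that $\sgn$ is a class function on $S_n$. Second, $\pi(x_T^c y_T^d) = x_{\pi T}^c y_{\pi T}^d$: this holds because the diagonal action permutes the $x$- and $y$-subscripts by the same rule $\pi$, while the exponents $c_i,d_i$ are indexed by the fixed ordering on the cells of $\lambda$ and not by label values, so $(\pi T)_i = \pi(T_i)$ makes the identity hold term-by-term. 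Combining yields
\[
\pi F_T^{c,d} = (\pi \varepsilon_T \pi^{-1})\,\pi (x_T^c y_T^d) = \varepsilon_{\pi T}(x_{\pi T}^c y_{\pi T}^d) = F_{\pi T}^{c,d}.
\]

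For the Garnir relation $G^{a,b}_t F_T^{c,d} = 0$, I would invoke Lemma \ref{lem:alpha} with $U = S^{a,b}_t$. The required involution on $UC(T)$ is the classical pigeonhole construction: letting $A$ be the cells of column $a$ weakly above row $t$ and $B$ be the cells of column $b$ weakly below row $t$, one has $|A|+|B| = \lambda'_a + 1$, which strictly exceeds the $\lambda'_a$ rows that $A \cup B$ spans. Consequently, for any $\sigma \in UC(T)$, two of the labels originally at cells of $A \cup B$ in $T$ must occupy the same row of $T$ after $\sigma$ acts; taking a canonical such pair (for example, the row-lexicographically smallest one under reading order) singles out a transposition $\rho_\sigma \in R(T)$, and one checks that $\sigma \mapsto \sigma\rho_\sigma$ is a fixed-point-free involution of $UC(T)$.

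The only obstacle, such as it is, lies in verifying that this pairing is well-defined on $UC(T)$ and has no fixed points. However, this combinatorial verification is precisely the content of the proof of the classical Garnir relation (cf.\ Lemma 3.3 of \cite{Peel} and Lemma 3.13 of \cite{GillespieRhoades}). Because Lemma \ref{lem:alpha} has been formulated so that its hypotheses depend only on $F_T^{c,d}$ arising as $\varepsilon_T$ applied to a monomial, and not on the specific exponents, the classical construction of the involution transfers verbatim to the two-variable setting and in fact, by the remark following the definition of $F_T^{c,d}$, to any number of sets of variables.
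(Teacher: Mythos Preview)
Your proposal is correct and follows essentially the same approach as the paper. For the equivariance you factor the computation into the two identities $\varepsilon_{\pi T}=\pi\varepsilon_T\pi^{-1}$ and $\pi(x_T^cy_T^d)=x_{\pi T}^cy_{\pi T}^d$, whereas the paper writes out the double sum directly, but these are the same argument; for the Garnir relation both you and the paper invoke Lemma~\ref{lem:alpha} with $U=S^{a,b}_t$ and defer to Peel's classical pigeonhole construction of the required involution, with you sketching the pigeonhole count a bit more explicitly.
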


\begin{proof}
    We have 
    \begin{align*}
        \pi F_T^{c,d} &= \pi \sum_{\tau \in C(T)}\sum_{\sigma \in R(T)}\sgn(\tau) \tau \sigma x_T^cy_T^d\\
         &= \sum_{\tau\in C(T)}\sum_{\sigma\in R(T)}\sgn(\tau)\pi\tau(\pi^{-1}\pi)\sigma\pi^{-1}x_{\pi(T)}^c y_{\pi(T)}^d \\
         &= \sum_{\tau\in C(T)}\sum_{\sigma\in R(T)}\sgn(\tau)(\pi\tau\pi^{-1})(\pi\sigma\pi^{-1})x_{\pi(T)}^c y_{\pi(T)}^d \\
         &= \sum_{\tau\in \pi C(T)\pi^{-1}}\sum_{\sigma\in \pi R(T)\pi^{-1}}\sgn(\tau)\tau \sigma x_{\pi(T)}^c y_{\pi(T)}^d \\
         &= \sum_{\tau\in C(\pi T)}\sum_{\sigma\in R(\pi T)}\sgn(\tau)\tau \sigma x_{\pi(T)}^c y_{\pi(T)}^d \\
         &= F^{c,d}_{\pi(T)}.
    \end{align*}  

    For the Garnir relations, the proof now exactly follows that of Theorem 3.1 in \cite{Peel} for the ordinary Specht polynomials starting from Lemma \ref{lem:alpha}, since the remaining steps of Peel's proof only depend on the operators $\alpha(U)$ and not on the specific polynomials they are applied to.  We therefore omit the rest of the details and refer the reader to \cite{Peel}.
\end{proof}

\begin{prop}\label{prop:irreducible}
 The subspace $M_\lambda=\mathrm{sp}(F_T^{c,d}:T\in \Tab(\lambda))\subseteq \mathbb{C}[\mathbf{x},\mathbf{y}]$ is generated by the subset $\mathbb{B}=\{F_T^{c,d}:T\in \SYT(\lambda)\}$.  Moreover, if the polynomials in $\mathbb{B}$ are independent, then $M_\lambda\cong V_\lambda$ as an $S_n$-module, and there is an action-preserving isomorphism induced by the bijection $F_T^{c,d}\to F_T$ that sends the basis $\mathbb{B}$ to the ordinary Specht basis.
\end{prop}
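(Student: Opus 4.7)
The plan is to reduce the proposition to classical Specht module theory by constructing an $S_n$-equivariant surjection $V_\lambda \twoheadrightarrow M_\lambda$ and showing it is an isomorphism precisely when $\mathbb{B}$ is linearly independent.

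First I would derive column antisymmetry of the $F_T^{c,d}$. For any $\tau \in C(T)$, the group-algebra identity $\tau \varepsilon_T = \tau\alpha(C(T))\beta(R(T)) = \sgn(\tau)\varepsilon_T$, combined with $\pi F_T^{c,d} = F_{\pi T}^{c,d}$ from Proposition \ref{prop:Garnir}, yields
\[
F_{\tau T}^{c,d} = \tau F_T^{c,d} = \sgn(\tau)\, F_T^{c,d}.
\]
Together with the Garnir relations $G^{a,b}_t F_T^{c,d} = 0$ of Proposition \ref{prop:Garnir}, this shows that the polynomials $F_T^{c,d}$, as $T$ ranges over $\Tab(\lambda)$, satisfy exactly the same defining linear relations as the classical Specht polynomials $F_T$.

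Next I would introduce the $S_n$-equivariant linear map $\varphi\colon \CC[\Tab(\lambda)] \to M_\lambda$ given by $[T] \mapsto F_T^{c,d}$, which is surjective by definition of $M_\lambda$ and $S_n$-equivariant by Proposition \ref{prop:Garnir}. By the standard presentation of $V_\lambda$ as the quotient of $\CC[\Tab(\lambda)]$ by column antisymmetry and the Garnir relations (see \cite{Peel}), the kernel of $\varphi$ contains these relations, so $\varphi$ factors through a surjective $S_n$-equivariant map $\tilde\varphi\colon V_\lambda \twoheadrightarrow M_\lambda$ with $\tilde\varphi(F_T) = F_T^{c,d}$. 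Since $\{F_T : T \in \SYT(\lambda)\}$ is a basis of $V_\lambda$, its image $\mathbb{B}$ spans $M_\lambda$, proving the first assertion. For the second assertion, if $\mathbb{B}$ is independent then $\tilde\varphi$ is nonzero, and by irreducibility of $V_\lambda$ must be injective, hence an $S_n$-module isomorphism with the action-preserving bijection $F_T \leftrightarrow F_T^{c,d}$ on bases.

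The main conceptual point is recognizing that the derivation of both the Garnir relations and the column antisymmetry in this setting uses no particular feature of the monomial $x_T^c y_T^d$ beyond the fact that $\varepsilon_T$ is applied to it, so the full classical presentation of $V_\lambda$ applies verbatim. Once this observation is made, the remaining argument is purely formal, invoking only the universal property of $V_\lambda$ and its irreducibility; no explicit straightening of non-standard $F_T^{c,d}$ is required.
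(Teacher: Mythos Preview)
Your proposal is correct and is essentially the paper's argument, just packaged more explicitly. The paper's proof is a one-line appeal to the Garnir straightening algorithm together with Proposition~\ref{prop:Garnir}; you unpack this by building the $S_n$-equivariant surjection $V_\lambda\twoheadrightarrow M_\lambda$ from the presentation of $V_\lambda$ by column antisymmetry and Garnir relations, which is exactly the content of that straightening algorithm. One small remark: once $\mathbb{B}$ is assumed independent, the map $\tilde\varphi$ sends a basis to an independent set and is therefore already injective by a dimension count, so invoking irreducibility is not strictly needed (though of course it is also valid).
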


\begin{proof}
This follows from the well-known Garnir straightening algorithm for polynomials that satisfy the Garnir relations (see \cite{Sagan}) along with Proposition \ref{prop:Garnir}.
\end{proof}

\subsection{Conditions for independence}

Proposition \ref{prop:irreducible} shows that if the basis elements are independent, then $M_\lambda$ is a copy of the irreducible Specht module.  We now show some conditions for determining independence in quotients by $S_n$-invariant ideals or in the full polynomial ring. 

\begin{prop}\label{prop:nonzero}
    The polynomials in $\mathbb{B}=\{F_T^{c,d}:T\in \SYT(\lambda)\}$ are independent in $\mathbb{C}[\mathbf{x},\mathbf{y}]/I$ if and only if some particular element $F_T^{c,d}$ is nonzero in the quotient (i.e., not in the ideal $I$).  
\end{prop}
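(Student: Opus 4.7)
The forward direction is immediate: if the polynomials in $\mathbb{B}$ are linearly independent in $\mathbb{C}[\mathbf{x},\mathbf{y}]/I$, then in particular no single $F_T^{c,d}$ can be zero in the quotient.

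For the converse, the plan is to realize the image of $\mathrm{sp}(F_T^{c,d}:T\in \Tab(\lambda))$ in $\mathbb{C}[\mathbf{x},\mathbf{y}]/I$ as an $S_n$-equivariant quotient of the irreducible Specht module $V_\lambda$, and then exploit irreducibility. Let $\overline{M}$ denote that image. Because the ideal $I$ is $S_n$-stable and $\pi F_T^{c,d}=F_{\pi T}^{c,d}$ by Proposition \ref{prop:Garnir}, $\overline{M}$ is an $S_n$-submodule of $\mathbb{C}[\mathbf{x},\mathbf{y}]/I$. By the Garnir straightening argument used in the proof of Proposition \ref{prop:irreducible}, the images of $F_T^{c,d}$ for $T$ standard already span $\overline{M}$, and any non-standard $F_T^{c,d}$ reduces to a linear combination of standard ones via exactly the same straightening procedure as the ordinary Specht polynomial $F_T$ does inside $V_\lambda$.

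This would yield a well-defined $S_n$-module surjection $\psi: V_\lambda \twoheadrightarrow \overline{M}$ sending the ordinary Specht polynomial $F_T$ to the image of $F_T^{c,d}$ for each $T\in \SYT(\lambda)$. Since $V_\lambda$ is irreducible, $\ker(\psi)$ is either $0$ or all of $V_\lambda$. If some $F_T^{c,d}$ is nonzero modulo $I$, then $\overline{M}\neq 0$, forcing $\ker(\psi)=0$; hence $\psi$ is an isomorphism, and the images of the $F_T^{c,d}$ for $T\in \SYT(\lambda)$ constitute a basis of $\overline{M}$, establishing the desired independence.

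The step that requires the most care is verifying that $\psi$ is well-defined, i.e., that every linear dependence among the ordinary Specht polynomials $F_T$ (for $T\in \SYT(\lambda)$) lifts to a corresponding dependence among the $F_T^{c,d}$ in the quotient. This is what makes the hypothesis of the Garnir relations do its work: Proposition \ref{prop:Garnir} shows that the $F_T^{c,d}$ satisfy both the Garnir relations $G^{a,b}_t(F_T^{c,d})=0$ and the equivariance $\pi F_T^{c,d}=F_{\pi T}^{c,d}$, which together are structurally identical to the relations satisfied by the ordinary Specht polynomials. Consequently the Garnir straightening algorithm produces the same expansion coefficients in both settings, and the map $\psi$ is forced to exist.
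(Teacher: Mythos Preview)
Your proof is correct and follows essentially the same approach as the paper: both construct an $S_n$-equivariant surjection $V_\lambda \twoheadrightarrow \overline{M}$ (with $\overline{M}$ the span of the $F_T^{c,d}$ in the quotient) and use irreducibility of $V_\lambda$ to conclude it is an isomorphism once the image is nonzero. The only cosmetic difference is that the paper phrases the last step via Schur's lemma plus the dimension bound $\dim \overline{M}\le |\mathbb{B}|=\dim V_\lambda$, whereas you argue directly that the kernel of a map out of an irreducible is $0$ or everything; these are equivalent.
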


\begin{proof}
    Let $M_\lambda$ be the span of $\mathbb{B}$.  Then by the discussion above, there is a natural map of $S_n$-modules $V_\lambda\to M_\lambda$ given by sending the ordinary Specht module generator for the tableau $T$ to $F_T^{c,d}$.  Consider the decomposition of $M_\lambda$ into irreducible representations; by composing with the projection onto any factor, we have by Schur's lemma that the map can only be nonzero on factors isomorphic to $V_\lambda$.  Since we are assuming there is a nonzero element in $M_\lambda$ mapped to by a nonzero element of $V_\lambda$, it follows that the decomposition of $M_\lambda$ into irreducibles contains at least one copy of $V_\lambda$.

    Since $|\mathcal{B}|=\dim(V_\lambda)$ we have $\dim(M_\lambda)\le \dim(V_\lambda)$, so by the dimension inequalty we have $M_\lambda\cong V_\lambda$ as desired.
\end{proof}

We now show a sufficient condition for one of the $F_T^{c,d}$ elements to be nonzero in the full polynomial ring $\mathbb{C}[\mathbf{x},\mathbf{y}]$ that we will use below.

\begin{prop}\label{prop:SSYT-values}
    Suppose that $c=(c_1,\ldots,c_n)$ and $d=(d_1,\ldots,d_n)$ are tuples of nonnegative integers, with their ordering corresponding to a chosen ordering of the boxes of the Young diagram of a partition $\lambda$ of $n$.  Suppose further that there exists a total ordering on the possible pairs of values $(m,n)$ that arise among the pairs $c_i,d_i$ such that, when the boxes of $\lambda$ are filled in order with $(c_i,d_i)$, it forms a semistandard Young tableau with respect to the ordering on the pairs.

    Then for any standard tableau $T$ of shape $\lambda$, the polynomial $F_{T}^{c,d}$ is nonzero in $\mathbb{C}[x,y]$.  In particular, the leading term $x_T^cy_T^d$ does not vanish.
\end{prop}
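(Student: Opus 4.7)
The plan is to show directly that the coefficient of the monomial $x_T^c y_T^d$ in the expansion of $F_T^{c,d}$ is a positive integer, which immediately implies $F_T^{c,d}$ is nonzero. Expanding the definition,
\[
[x_T^c y_T^d]\, F_T^{c,d} = \sum_{(\tau,\sigma)} \sgn(\tau),
\]
where the sum runs over pairs $(\tau,\sigma) \in C(T) \times R(T)$ satisfying $(\tau\sigma)(x_T^c y_T^d) = x_T^c y_T^d$. Via the bijection between values and boxes induced by $T$, each such $\tau$ transports to a column-preserving permutation $\tilde\tau$ of the boxes of $\lambda$, and $\sigma$ to a row-preserving $\tilde\sigma$; a direct calculation shows the stabilizer condition is equivalent to $\tilde\tau\tilde\sigma$ preserving the pair-labeling $b \mapsto (c_b,d_b)$. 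Since a permutation preserving both each row and each column of $\lambda$ must fix every box, any such element has a unique factorization as $\tilde\tau\tilde\sigma$.

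The key step is to establish the following Key Lemma: whenever a column-preserving $\tilde\tau$ composed with a row-preserving $\tilde\sigma$ preserves the SSYT pair-labeling, we must have $\tilde\tau = e$. Granting the Key Lemma, the stabilizer sum reduces to the number of pair-preserving row permutations of boxes (all contributing $\sgn(e) = +1$), which is at least $1$ and hence strictly positive. In particular, $x_T^c y_T^d$ appears in $F_T^{c,d}$ with positive coefficient.

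To prove the Key Lemma, I would argue by contradiction. Assuming $\tilde\tau \neq e$, let $c_0$ be the leftmost column on which $\tilde\tau$ acts nontrivially, and $r_0$ the smallest moved row of $\tilde\tau_{c_0}$; bijectivity and minimality force $r_1 := \tilde\tau_{c_0}(r_0) > r_0$, so column-strictness of $\prec$ gives $\mathrm{pair}(r_1,c_0) \succ \mathrm{pair}(r_0,c_0)$. Tracing the $\tilde\tau\tilde\sigma$-preimage of $(r_1,c_0)$ back through $\tilde\sigma$, which preserves rows, lands in some box $(r_0,c')$ of row $r_0$; pair-preservation combined with the row-weak monotonicity of $\prec$ then forces $c' > c_0$. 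Iterating this chase of preimages and images, and exploiting the horizontal-strip structure of the pair-level-sets of an SSYT (no two equal-pair boxes share a column, and among two equal-pair boxes in distinct rows the lower row lies strictly to the right), produces a chain of pair-equalities that eventually contradicts either column-strictness or the minimality of $(r_0,c_0)$.

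The main obstacle will be the detailed combinatorial case analysis in the Key Lemma. The cleanest execution is by induction on the number of columns of $\lambda$: the minimality argument shows $\tilde\tau$ is trivial on the leftmost column, after which the remaining columns form a smaller SSYT to which the inductive hypothesis applies. A useful auxiliary fact, verifiable in isolation, is that any transposition in the stabilizer of the pair-labeling that swaps two boxes in distinct rows and distinct columns cannot be written as a column-then-row product, since any such factorization would unavoidably induce a product of two disjoint transpositions rather than an isolated swap.
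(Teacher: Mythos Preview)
Your reduction is exactly the paper's: compute the coefficient of $x_T^c y_T^d$ in $F_T^{c,d}$ as $\sum \sgn(\tau)$ over stabilizing pairs, transport via $T$ to box-permutations, and prove the Key Lemma that any column-permutation $\tilde\tau$ composed with a row-permutation $\tilde\sigma$ that preserves the SSYT labeling must have $\tilde\tau=e$. The paper also first collapses the pairs $(c_i,d_i)$ to single integers $b_i$ using the given total order, which is a harmless simplification you could adopt.

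Where your argument diverges is in the proof of the Key Lemma. The paper does \emph{not} chase from a leftmost nontrivial column; it peels off the \emph{maximal value}. Concretely: the boxes carrying the largest label $v$ form a horizontal strip, each at the top of its column. For any $v$-box $(r,c)$ one has $\tilde\sigma(r,c)=(r,c')$ and then $\tilde\tau(r,c')=(r'',c')$ must again be a $v$-box, hence the top of column $c'$, forcing $r''\ge r$. Since $p=\tilde\tau\tilde\sigma$ permutes the $v$-boxes and weakly raises rows, it preserves rows, so $r''=r$ and $\tilde\tau$ fixes every $v$-box; simultaneously $\tilde\sigma$ permutes the $v$-boxes within rows. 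Deleting the $v$-strip yields a smaller SSYT on which $\tilde\tau$ and $\tilde\sigma$ restrict, and induction on the number of distinct labels finishes the proof.

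Your leftmost-column contradiction chase, as written, does not obviously close. After obtaining $(r_0,c')$ with $c'>c_0$ and the same pair as $(r_1,c_0)$, following $p$ forward from $(r_1,c_0)$ can land harmlessly in the same row or in a lower row of a rightward column without violating semistandardness, so the ``chain of pair-equalities'' need not hit column-strictness or minimality in any bounded number of steps. More seriously, the proposed induction on columns does not reduce cleanly: even if you show $\tilde\tau$ is trivial on the leftmost column, $\tilde\sigma$ can still move boxes in and out of that column within each row, so the restriction to the remaining columns is not an instance of the same problem. By contrast, the value-peeling argument automatically shows that \emph{both} $\tilde\tau$ and $\tilde\sigma$ preserve the top horizontal strip, which is what makes the induction go through. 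I would replace your chase by the max-value peel; everything else in your write-up is correct.
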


\begin{example}\label{ex:horz-strips}
    Suppose we choose the ordering on pairs $(m,n)$ by $(m,n)>(x,y)$ if and only if either $m>x$ or $m=x$ and $n>y$.  Consider the following tableau, where pairs $(m,n)$ are written as $mn$:
    $$\begin{ytableau}
        22 & 22 \\
        21 & 21 & 22 & 22 \\
        01 & 10 & 10 & 13 \\
        00 & 00 & 00 & 01 & 10
    \end{ytableau}$$
    Also, for simplicity let $T$ be the tableau that standardizes the above (the choice of $T$ will not matter in the proof): $$\begin{ytableau}
        12 & 13 \\ 
        10 & 11 & 14 & 15 \\
        4 &  6 & 7  & 9 \\
        1 & 2 & 3 & 5 & 8
    \end{ytableau}$$
    Then $x_T^c y_T^d=x_{15}^2 y_{15}^2 \cdot x_{14}^2y_{14}^2\cdot x_{13}^2y_{13}^2\cdot x_{12}^2y_{12}^2\cdot x_{11}^2y_{11}\cdots $.  In the proof below, we will show that this leading term does not cancel after applying $\varepsilon_T$.
\end{example}

\begin{proof}
    We will show that any term $\tau\sigma$ arising in $\varepsilon_T$ that fixes the leading term $x_T^c y_T^d$ actually fully lies in $R(T)$, so that $\tau$ must be the identity permutation and the sign is positive.  This shows that the leading term has a nonzero coefficient, so the polynomial is nonzero.

    We first make some notational simplfications; the proof will not depend on the choice of $T$, so we may assume $T$ is the standardization of the tableau labeled by the $(c_i,d_i)$ pairs and that this standardization order is the ordering on the boxes in $\lambda$ as well.  This means that $(c_1,d_1)\le (c_2,d_2)\le \cdots \le (c_n,d_n)$ with respect to the total ordering on pairs, and for the purposes of showing that a coefficient is nonzero, we can change the pairs $(c_i,d_i)$ to numbers $b_1\le \ldots\le b_n$ such that $b_i=b_{i+1}$ if and only if $(c_i,d_i)=(c_{i+1},d_{i+1})$.  We can then replace the monomial $x_T^cy_T^d$ with $x_T^b$ and reduce to the standard action of $S_n$ rather than the diagonal action.  
    
    For instance, in Example \ref{ex:horz-strips}, the tableau of pairs can be replaced with the SSYT below:
    $$\begin{ytableau}
        6 & 6 \\
        5 & 5 & 6 & 6 \\
        2 & 3 & 3 & 4 \\
        1 & 1 & 1 & 2 & 3 
    \end{ytableau}$$
    and the monomial $x_T^b$ is $$x_{15}^6x_{14}^6x_{13}^6x_{12}^6x_{11}^5x_{10}^5x_{9}^4x_{8}^3x_7^3x_6^3x_5^2x_4^2x_3x_2x_1.$$
    We first show that if $\tau\in C(T)$ and $\sigma \in R(T)$ and $\tau\sigma(x_T^b)=x_T^b$, then if $n-r,n-r+1,\ldots,n$ are the entries in the row of $n$ such that $b_{n-r}=b_{n-r+1}=\cdots = b_n$, then  $\tau\sigma(n)$ is among $n-r,\ldots,n$.  (In the example above, $b_{14}=b_{15}=6$ are the entries in the row of $15$ with the largest exponent, and we want to show that $\tau\sigma(15)$ is either $14$ or $15$.) 

    Indeed, in the example, since it fixes $x_T^b$, $\tau\sigma$ must send $15$ to either $12, 13, 14$, or $15$ so that it retains its exponent.  But $\sigma$ cannot send any element in a row below  the $15$, like $9$ or $5$, to $12, 13, 14,$ or $15$, so then $\tau$ cannot send $15$ to an element with exponent $6$ without $\sigma$ already sending it there.  In general, this reasoning means $\tau \sigma$ must permute the entries $n-r,\ldots,n$.
    By a similar argument, $\tau\sigma$ must permute the entries in the next row up that also have value $b_n$, and so on, so all of the entries for which $b_i=b_n$ are sent to other entries in their row with the same $b$ value. 

    Since these entries are fixed amongst themselves, we then consider the second highest value of $b_i$ (in this example, $5$), and repeat the argument, and so on.  Thus $\tau\sigma\in R(T)$ as desired.
\end{proof}

\section{A higher Specht basis for the hook-shape Garsia--Haiman modules}\label{sec:hooks}

Throughout this subsection, we set $\mu=(n-k+1,1^{k-1})$ to be the \textit{hook shape} of height $k$ and size $n$.  We now define our higher Specht polynomials for the hook shape Garsia--Haiman modules.

\begin{definition}
  For $S\in \SYT(n)$, define the \textbf{$\mu$-cocharge tableau} of $S$, written $\ccind_\mu(S)$, to be the tableau of shape $\sh(S)$ that has $0$ in the squares occupied by $1,2,\ldots,n-k+1$ in $S$, and where for $n-k+1,n-k+2,\ldots,n$ we apply the cocharge algorithm, incrementing the label whenever we do not wrap around in reverse reading order.
  
 The \textbf{$\mu$-cocharge} of $S$, written $\cc_\mu(S)$, is the sum of the entries of $\ccind_\mu(S)$.
\end{definition}

\begin{definition}
    For $S\in \SYT(n)$, define the \textbf{reverse $\mu$-cocharge tableau}, written $\ccind'_\mu(S)$, to be the tableau of shape $\sh(S)$ that has $0$ in the squares occupied by $n-k+1,\ldots,n$ in $S$, and where for $n-k+1,n-k,n-k-1,\ldots,1$ we calculate cocharge in \textit{reverse} (forward reading order and from biggest number to smallest), incrementing when we don't wrap around.  
    
    The \textbf{$\mu$-reverse cocharge}, written $\cc'_\mu(S)$, is the sum of the entries of the reverse $\ccind'_\mu(S)$.
\end{definition}

\begin{example}\label{ex:cctab}
  Suppose $n=7$, $S$ is the tableau $$\young(67,35,124),$$ and $k=4$.  Then $\ccind_\mu(S)$ is the tableau $$\young(22,01,000),$$ and $\ccind'_\mu(S)$ is the tableau $$\young(00,00,110).$$
\end{example}

\begin{definition}
  Let $(T,S)\in \mathrm{Tab}(\lambda)\times \SYT(\lambda)$ for a fixed shape $\lambda$. The $\mu$-\textbf{monomial} of $(T,S)$ is $$\mathbf{xy}_T^S:=\prod_{b\in \lambda} x_{T(b)}^{\ccind_\mu(S)(b)} y_{T(b)}^{\ccind'_\mu(S)(b)}.$$  Thus we have $\cc_\mu'(S)=1+1=2$ and $\cc_\mu(S)=1+2+2=5$. Notice also that $n-k+1=4$ and $\maj_{1,4}(S)=2$ and $\comaj_{4,7}(S)=(7-4)+(7-5)=5$.
\end{definition}

\begin{lemma}\label{lem:degrees}
    We have that $\cc_\mu(S)=\comaj_{n-k+1,n}(S)$ and $\cc_\mu'(S)=\maj_{1,n-k+1}(S)$ for any standard Young tableau $S$ (see Definition \ref{def:maj-comaj}).
\end{lemma}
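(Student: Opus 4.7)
The plan is to prove both identities by directly matching the subscripts appearing in $\ccind_\mu(S)$ (respectively $\ccind'_\mu(S)$) to the descents of $S$ in an appropriate range, and then double-counting.

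The first step is to reinterpret when the cocharge subscript is incremented in terms of descents of $S$. In the algorithm computing $\ccind_\mu(S)$, moving from value $j$ to value $j+1$ in the leftward cyclic search ``does not wrap'' precisely when $j+1$ occurs at an earlier position than $j$ in the reading word of $S$ --- equivalently, when $j \in \Des(S)$. Critically, the intermediate values (those outside $\{n-k+1,\ldots,n\}$) do not affect the wrap status, since wrapping only depends on whether the search crosses the boundary between the last and first positions of the reading word, not on what values are skipped over in between. Consequently, the subscript assigned to each $v \in \{n-k+1,\ldots,n\}$ equals $|\Des(S) \cap [n-k+1,\,v-1]|$.

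Summing over $v$ and switching the order of summation then gives
\[
\cc_\mu(S) = \sum_{v=n-k+1}^{n} |\Des(S) \cap [n-k+1,\,v-1]| = \sum_{d \in \Des(S) \cap [n-k+1,\,n-1]} (n - d) = \comaj_{n-k+1,\,n}(S),
\]
using that the descent $d$ contributes to the $v$-sum exactly when $d < v \le n$, accounting for a multiplicity of $n - d$.

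A parallel argument establishes the second identity. For $\ccind'_\mu(S)$, the rightward search from $j$ to $j-1$ ``does not wrap'' iff $j-1$ occurs after $j$ in the reading word, equivalently iff $j \in \Des(S)$ shifted by one --- that is, iff $j-1 \in \Des(S)$. Thus the subscript of $v \in \{1,\ldots,n-k+1\}$ equals $|\Des(S) \cap [v,\,n-k]|$, and a symmetric double-count yields $\cc'_\mu(S) = \maj_{1,\,n-k+1}(S)$. I do not anticipate any serious obstacle; the only delicate point is the bookkeeping at the boundaries --- verifying that the anchor value $n-k+1$ correctly receives subscript $0$ in both labelings, that the ranges of summation for $d$ align with the half-open convention in Definition~\ref{def:maj-comaj}, and that the wrap-detection argument really is insensitive to the values being skipped over.
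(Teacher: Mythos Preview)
Your argument is correct and follows essentially the same approach as the paper: both identify that the cocharge subscript increments precisely at descents, and then observe that a descent $d$ contributes $n-d$ (respectively $d$) to the total. The only difference is cosmetic: for the second identity the paper appeals to the flip/reverse maps from Section~\ref{sec:background} to reduce to the first case, whereas you rerun the argument directly in the reversed direction; your version is slightly more self-contained but otherwise equivalent.
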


\begin{proof}
    To compute $\cc_\mu(S)$, notice that we begin at $n-k+1$ and label with a subscript of $0$, and then continue to find $n-k+2,n-k+3,\ldots,n$.  If we are at a descent, we increment the subscript label at the next step.  Thus each descent $d$ we encounter contributes $n-d$ to the total cocharge, and so the total is equal to $\comaj_{n-k+1,n}(S)=\sum(n-d)$ where the sum is over descents $d$ such that $n-k+1\le d<n$.

    For $\cc'_\mu(S)$, we use the reverse and flip maps on the reading word established in Section \ref{sec:GH}, and then by the same argument as above, we see that $\cc'_\mu(S)=\maj_{1,n-k+1}(S)$.
\end{proof}

\begin{definition}
  For each pair $(T,S)$ as above, we define the polynomial $$F_{T}^S=\varepsilon_T \mathbf{xy}_T^S.$$
\end{definition}

\begin{example}
    Keeping $S$ as in Example \ref{ex:cctab} and $k=4$, we set $$T=\young(56,24,137)\hspace{1cm}\text{and} \hspace{1cm}S=\young(67,35,124)$$ and we find $$\mathbf{xy}^S_T=x_{4}x_5^2x_6^2y_1y_3.$$
\end{example}

\begin{example}
    Setting $$T=\young(34,12), \hspace{1cm}S=\young(24,13)$$ and $k=2$, we have $\mu=(3,1)$ and so $$\ccind_\mu(S)=\young(01,00),\hspace{1cm}\ccind_\mu'(S)=\young(00,10).$$  We then have  $\mathbf{xy}^S_T=x_4y_1$ and $$F_T^S= x_4y_1 + x_3y_1 + x_4y_2 + x_3y_2 - 2 (x_2y_1 + x_1y_2 + x_4y_3 + x_3y_4)+ x_1y_4 + x_1y_3 + x_2y_4 + x_2y_3.$$
\end{example}

We first use our general theory from Section \ref{sec:general} show that for any fixed $S$ of shape $\lambda$, the polynomials $F_T^S$ are indeed higher Specht polynomials in the full polynomial ring $\mathbb{C}[\mathbf{x},\mathbf{y}]$.

\begin{prop}\label{prop:if-basis-then-specht}
The polynomials $F_T^S$ for $T\in \SYT(\lambda)$ span a copy of $V_\lambda$ in $\mathbb{C}[\mathbf{x},\mathbf{y}]$.
\end{prop}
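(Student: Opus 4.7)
The plan is to apply the general framework of Section~\ref{sec:general}: setting $c_i=\ccind_\mu(S)(b_i)$ and $d_i=\ccind_\mu'(S)(b_i)$ for a fixed ordering $b_1,\ldots,b_n$ of the boxes of $\lambda$, Proposition~\ref{prop:irreducible} reduces the claim to linear independence of $\{F_T^S\}_{T\in\SYT(\lambda)}$ in $\mathbb{C}[\mathbf{x},\mathbf{y}]$, and Proposition~\ref{prop:nonzero} (with $I=(0)$) further reduces this to nonvanishing of a single $F_T^S$. I will in fact obtain nonvanishing for every $T\in\SYT(\lambda)$ at once via Proposition~\ref{prop:SSYT-values}, which amounts to exhibiting a total order on pairs of nonnegative integers under which the filling of $\lambda$ by exponent pairs $(\ccind_\mu(S)(b),\ccind_\mu'(S)(b))$ is a semistandard Young tableau.

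For this I propose $(a,b)\preceq(c,d)$ iff $a<c$, or $a=c$ and $b\ge d$, i.e.\ lexicographic on the first coordinate with the second tie-broken in reverse. To verify the SSYT property, I write $\ccind_\mu(S)(v)$ for the label at the box of $S$ containing the value $v$, and similarly for $\ccind_\mu'(S)$. Unwinding the descent-theoretic interpretation of the cocharge algorithm as in the proof of Lemma~\ref{lem:degrees} gives, for any $v<w$ in $\{1,\ldots,n\}$,
\[
\ccind_\mu(S)(w) - \ccind_\mu(S)(v) = |\Des(S)\cap[\max(n-k+1,\,v),\,w-1]|
\]
and
\[
\ccind_\mu'(S)(v) - \ccind_\mu'(S)(w) = |\Des(S)\cap[v,\,\min(n-k,\,w-1)]|,
\]
both of which are nonnegative. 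Applied to horizontally adjacent boxes $b_1,b_2$ with $S(b_1)<S(b_2)$, these two inequalities immediately yield the weak row condition on the pair filling under $\preceq$.

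The main step, and the place where care is needed, is the strict column condition. Suppose $b_1$ lies directly below $b_2$ in the same column, with values $v=S(b_1)<w=S(b_2)$. Since $b_2$ is strictly above $b_1$, the integer chain $v,v+1,\ldots,w$ must at some step jump from a row $\le r$ to a row $\ge r+1$ of $S$, forcing at least one descent $m\in[v,w-1]$ of $S$. Any such $m$ lies in exactly one of the disjoint subintervals $[v,n-k]$ or $[n-k+1,w-1]$: in the former case $m$ contributes a strict decrement to the second coordinate of the pair, and in the latter a strict increment to the first. Either alternative yields strictness under $\preceq$, so the pair filling is an SSYT, and Proposition~\ref{prop:SSYT-values} completes the proof.
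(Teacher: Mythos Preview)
Your proof is correct and follows essentially the same route as the paper: reduce to the SSYT hypothesis of Proposition~\ref{prop:SSYT-values} and then invoke Propositions~\ref{prop:nonzero} and~\ref{prop:irreducible}. Your lexicographic order (first coordinate increasing, second decreasing as tiebreak) coincides with the paper's ``negate the second coordinate and order by the sum'' on every pair that actually occurs---since $\ccind_\mu(S)$ and $\ccind_\mu'(S)$ have disjoint nonzero supports---and your explicit descent-counting verification of column strictness is a more detailed rendering of the paper's horizontal-strip argument.
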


\begin{proof}
We show that the values of $c:=\ccind_\mu(S)$ and $d:=\ccind'_\mu(S)$ form a semistandard Young tableau according to a particular ordering on the pairs of values, so that it satisfies the conditions of Proposition \ref{prop:SSYT-values}.  Then, the result will follow by Proposition \ref{prop:irreducible}.

First, to define the ordering, note that the nonzero entries of the $\mu$-cocharge tableau are disjoint from the nonzero entries of the reverse $\mu$-cocharge tableau.  We then negate the values of the reverse $\mu$-cocharge tableau and define the ordering on the pairs of values in each box by their sum.  (See Figure \ref{fig:cctab-ssyt}.)

Now, consider the $\mu$-cocharge labeling.  Starting at the square containing $n-k+1$, we label it with $0$, and then label $n-k+2,n-k+3,\ldots,n$ with subscripts that increment when not wrapping around the reading word in reverse reading order.  By standardness, the squares labeled $i$ in this process for any given $i$ form a horizontal strip, and so the labels form a semistandard skew tableau.  

The same holds for the labels of the reverse $\mu$-cocharge tableau, and the squares labeled $0$ in both tableaux are precisely the horizontal strip consisting of the longest sequence of consecutive numbers including $n-k+2$ that occurs in order in the reading word.    This completes the proof.
\end{proof}

\begin{figure}
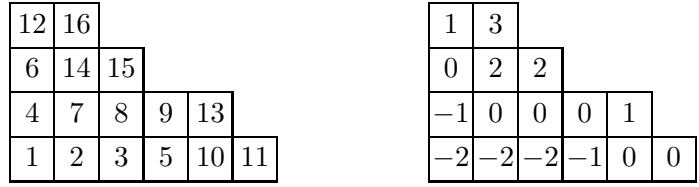

    \centering
    $$\begin{ytableau}
        12 & 16 \\
        6 & 14 & 15 \\
        4 & 7 & 8 & 9 & 13 \\
        1 & 2 & 3 & 5 & 10 & 11
    \end{ytableau}\hspace{2cm}
    \begin{ytableau}
        1 & 3 \\
        0 & 2 & 2 \\
         -1 & 0 & 0 & 0 & 1 \\
        -2 & -2 & -2 & -1 & 0 & 0
    \end{ytableau}$$
    \caption{A tableau $S$ at left, and its values of $\ccind_\mu(S)$ and the negated labels from $\ccind'_\mu(S)$ superimposed at right.  Here $n=16$ and $k=9$, so $n-k+1=8$.}
    \label{fig:cctab-ssyt}
\end{figure}

We now note that we can recover the one-variable higher Specht polynomials from our construction by a substitution and a degree shift.

\begin{lemma}\label{lem:x-inverse}
    Consider the map to the Laurent polynomial ring (also defined in \cite{HookBases}) $$\psi:\mathbb{C}[x_1,\ldots,x_n,y_1,\ldots,y_n]\to \mathbb{C}[x_1,\ldots,x_n,x_1^{-1},\ldots,x_n^{-1}]$$ that sends $y_i\mapsto x_i^{-1}$.   Then if $q$ is the largest entry in $\ccind'_\mu(S)$, we have $$\psi(F_T^S(\mathbf{x},\mathbf{y}))\cdot x_1^qx_2^q\cdots x_n^q=F_T^S(\mathbf{x})$$ where $F_T^S(\mathbf{x})$ is the one-variable higher Specht polynomial  defined by Ariki, Terasoma, and Yamada (see Equation \eqref{eq:ATY}).
\end{lemma}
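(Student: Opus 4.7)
My plan is to exploit the $S_n$-equivariance of $\psi$ to pull it through the Young symmetrizer $\varepsilon_T$, and then reduce the statement to a pointwise identity on exponents. Since the diagonal action satisfies $\pi\cdot y_i = y_{\pi(i)}$, the substitution $\psi\colon y_i\mapsto x_i^{-1}$ satisfies $\psi(\pi\cdot y_i) = x_{\pi(i)}^{-1} = \pi\cdot \psi(y_i)$, and acts as the identity on the $x_i$, so $\psi$ is $S_n$-equivariant and therefore commutes with $\varepsilon_T\in\mathbb{Q}[S_n]$. Consequently
$$\psi(F_T^S(\mathbf{x},\mathbf{y})) = \varepsilon_T\,\psi(\mathbf{xy}_T^S) = \varepsilon_T\prod_{b\in\lambda} x_{T(b)}^{\ccind_\mu(S)(b)-\ccind'_\mu(S)(b)}.$$
The factor $x_1^q\cdots x_n^q = \prod_b x_{T(b)}^q$ is $S_n$-invariant and thus commutes with $\varepsilon_T$, so after multiplication the claim reduces to the pointwise identity
$$\ccind(S)(b) = \ccind_\mu(S)(b) + q - \ccind'_\mu(S)(b)$$
at every box $b\in\lambda$.

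To establish this identity, let $\ell(v)$, $\ell_\mu(v)$, and $\ell'_\mu(v)$ denote the values of $\ccind(S)$, $\ccind_\mu(S)$, and $\ccind'_\mu(S)$ at the box of $S$ containing $v$, and set $r := n-k+1$. For $v\ge r$, the increment rule for $\ell$ passing from $v$ to $v+1$ is identical to that for $\ell_\mu$: both increment precisely when $v+1$ lies to the left of $v$ in the reading word of $S$. Since $\ell_\mu(r)=0$ while $\ell(r)$ is whatever the ordinary cocharge assigns, telescoping yields $\ell(v) = \ell_\mu(v) + \ell(r)$, and as $\ell'_\mu(v)=0$ in this range the identity holds. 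For $v\le r$, the reverse $\mu$-cocharge rule increments $\ell'_\mu(v)$ as $v$ descends from $r$ precisely when $v$ lies to the right of $v+1$ in the reading word, which is the same condition under which $\ell(v+1)-\ell(v)=1$. Telescoping this matched pair of recursions gives $\ell'_\mu(v) = \ell(r) - \ell(v)$, and since $\ell_\mu(v)=0$ in this range, the identity again holds.

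Finally, since $\ell'_\mu$ is nondecreasing as $v$ descends from $r$ to $1$ and $\ell(1)=0$, the maximum of $\ell'_\mu$ is $\ell'_\mu(1) = \ell(r)$, so $q = \ell(r)$ as required. Applying $\varepsilon_T$ to both sides of the pointwise identity then recovers $F_T^S(\mathbf{x}) = \varepsilon_T\prod_b x_{T(b)}^{\ccind(S)(b)}$, completing the proof. The main obstacle is identifying and verifying the correct pointwise identity; the conventions for the three cocharge algorithms differ in direction (leftward versus forward) and in whether incrementing is triggered by wrapping or by its absence, so the key insight is that, when properly aligned, all three recursions register the same event---the relative left/right position of $v$ and $v+1$ in the reading word---after which the computation reduces to two parallel telescoping arguments on either side of the anchor value $r$.
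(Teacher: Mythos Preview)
Your proof is correct and follows essentially the same approach as the paper's own proof, which simply asserts that after applying $\psi$ and shifting by $q$ the resulting exponent tableau ``has the same descent set and therefore the same cocharge values as computing ordinary cocharge of $S$.'' You have made this assertion precise by explicitly invoking the $S_n$-equivariance of $\psi$ and the invariance of $x_1^q\cdots x_n^q$ to reduce to the pointwise exponent identity $\ccind(S)(b)=\ccind_\mu(S)(b)+q-\ccind'_\mu(S)(b)$, and then verified that identity by the two telescoping arguments anchored at $r=n-k+1$; this is exactly the content behind the paper's one-sentence justification, carried out in full detail.
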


\begin{proof}
Applying the map $\psi$ effectively interprets the cocharge values for $y$ in $\ccind'_\mu(S)$ as negative entries.  Shifting all of these values up by $q$ gives the same descent set and therefore the same cocharge values as computing ordinary cocharge of $S$. The result follows.  
\end{proof}

\subsection{Independence in the full polynomial ring}

We have shown that for any fixed $S$, the set of polynomials $F_{T}^S$ ranging over all $T$ of the same shape as $\lambda$ generates a copy of $V_{\sh(S)}$.  We now show that these copies are all distinct for different $S$.  For notational brevity, we make the following definition.

\begin{definition}
    We define $M_\lambda^S$ to be the copy of $V_\lambda$ inside $\mathbb{C}[\mathbf{x},\mathbf{y}]$ generated by the polynomials $F_T^S$ over all $T\in\SYT(\sh(S))$.
\end{definition}

\begin{lemma}\label{lem:disjoint}
    If $S_1$ and $S_2$ are two standard Young tableaux of size $n$ and shapes $\lambda$ and $\rho$ respectively, then $M_\lambda^{S_1}$ and $M_{\rho}^{S_2}$ are disjoint submodules of $\mathbb{C}[\mathbf{x},\mathbf{y}]$.
\end{lemma}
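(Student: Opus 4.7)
The plan is to split into two cases based on whether $S_1$ and $S_2$ share a shape. In the case $\lambda\neq\rho$, Proposition \ref{prop:if-basis-then-specht} says $M_\lambda^{S_1}\cong V_\lambda$ and $M_\rho^{S_2}\cong V_\rho$ are irreducible Specht modules of non-isomorphic type; their intersection is a subrepresentation of each, so by irreducibility it is either zero or contains an irreducible isomorphic to both $V_\lambda$ and $V_\rho$, which is impossible. Hence the intersection is zero, disposing of the shape-distinct case.

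The substantive case is $\lambda=\rho$ with $S_1\neq S_2$. Each $F_T^S$ is bi-homogeneous of bi-degree $(\cc_\mu(S),\cc'_\mu(S))$ (independent of $T$), and the diagonal action preserves bi-degree, so each $M_\lambda^{S_i}$ is concentrated in a single bi-graded piece. If the bi-degrees of $S_1$ and $S_2$ differ, the two submodules lie in different graded pieces and are trivially disjoint. Otherwise, since each is an irreducible copy of $V_\lambda$, they are either equal or disjoint, so it suffices to show that $\{F_T^{S_1}\}_{T\in\SYT(\lambda)}\cup\{F_{T'}^{S_2}\}_{T'\in\SYT(\lambda)}$ is linearly independent in $\CC[\mathbf{x},\mathbf{y}]$.

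To prove the independence, I would apply the $S_n$-equivariant ring homomorphism $\psi:\CC[\mathbf{x},\mathbf{y}]\to\CC[\mathbf{x}^{\pm 1}]$ from Lemma \ref{lem:x-inverse}. A putative relation $\sum_{T,i}\alpha_{T,i}F_T^{S_i}=0$ then yields, after multiplication by $(x_1\cdots x_n)^Q$ for $Q$ at least as large as $q(S_1)$ and $q(S_2)$, the identity
$$\sum_{T,i}\alpha_{T,i}\,(x_1\cdots x_n)^{Q-q(S_i)}\,F_T^{S_i}(\mathbf{x})=0$$
in $\CC[\mathbf{x}]$, where $F_T^{S_i}(\mathbf{x})$ denotes the one-variable higher Specht polynomial of Ariki--Terasoma--Yamada. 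Setting $m=\min\{Q-q(S_i):\alpha_{T,i}\neq 0\text{ for some }T\}$, I would factor $(x_1\cdots x_n)^m$ out of every surviving term, divide it off (valid since $x_1\cdots x_n$ is a non-zero-divisor in $\CC[\mathbf{x}]$), and pass to $R_n=\CC[\mathbf{x}]/(e_1,\ldots,e_n)$. In $R_n$ we have $e_n=x_1\cdots x_n=0$, so only the terms with $Q-q(S_i)=m$ survive; the ATY basis theorem then forces their coefficients to vanish. Iterating --- peeling off one factor of $e_n$ at a time and re-reducing --- progressively kills every $\alpha_{T,i}$.

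The main delicate point is that $q(S)$, the largest entry in $\ccind'_\mu(S)$, is not constant across different $S$ of the same bi-degree, so the Laurent-polynomial images $\psi(F_T^{S_i})$ are scaled by different powers of $x_1\cdots x_n$. The iterative reduction modulo $(e_1,\ldots,e_n)$, which exploits both the vanishing of $e_n$ in $R_n$ and its regularity in $\CC[\mathbf{x}]$, is what resolves this varying shift. As a byproduct, the same argument in fact establishes the stronger statement that the full collection $\{F_T^S:S\in\SYT(n),\,T\in\SYT(\sh(S))\}$ is linearly independent in $\CC[\mathbf{x},\mathbf{y}]$, which is more than Lemma \ref{lem:disjoint} strictly requires.
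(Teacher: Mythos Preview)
Your argument is correct and reaches the same conclusion, but it organizes the reduction to the one-variable Ariki--Terasoma--Yamada theorem differently from the paper. The paper refines the case $\lambda=\rho$ not by bi-degree but by the \emph{multisets} $C_S$ and $C'_S$ of labels appearing in $\ccind_\mu(S)$ and $\ccind'_\mu(S)$: if these multisets differ, the monomials occurring in elements of the two modules have different exponent multisets, so the submodules cannot coincide and (being irreducible) must be disjoint; if the multisets agree, then in particular the maximal entries satisfy $q(S_1)=q(S_2)$, so after applying $\psi$ one can clear a \emph{common} denominator $(x_1\cdots x_n)^q$ and land directly on the one-variable higher Specht polynomials, whose independence in $R_n$ (and hence in $\CC[\mathbf{x}]$) finishes the argument in one stroke, with no iteration. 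Your coarser split by bi-degree forces you to confront the possibility $q(S_1)\neq q(S_2)$, which you resolve with the peel-off-$e_n$-and-reduce trick. That trick is a genuine extra idea not present in the paper; it costs you a slightly longer argument, but as you observe, it buys the stronger conclusion that the entire family $\{F_T^S\}$ is linearly independent in $\CC[\mathbf{x},\mathbf{y}]$, not merely that the modules $M_\lambda^S$ are pairwise disjoint.
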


\begin{proof}
    If $\lambda\neq \rho$, this is clear by the uniqueness of the decomposition into irreducible $S_n$ representations.

    If $\lambda=\rho$, let the multisets of labels in $\ccind_\mu(S_1)$ and $\ccind_\mu(S_2)$ respectively be denoted $C_{S_1}$ and $C_{S_2}$, and let the multisets of labels in $\ccind_\mu'(S_1)$ and $\ccind_\mu(S_2)'$ respectively be denoted $C_{S_1}'$ and $C_{S_2}'$.  If either $C_{S_1} \neq C_{S_2}$ or $C'_{S_1} \neq C'_{S_2}$, then the exponent tuples on the $x$ and $y$ variables involved in each term of any element of $M_\lambda^{S_1}$ are constant and differ from those of $M_\lambda^{S_2}$, and so the two cannot be the same submodule.

    Finally, if $\lambda=\rho$ and $C_{S_1}=C_{S_2}$ and $C'_{S_1}=C'_{S_2}$, then in particular the largest degree, $d$, in the $y$ variables for the term of every element in each module is equal.  We consider the map $\psi$ as in Lemma \ref{lem:x-inverse}, and consider the images of $M_\lambda^{S_1}$ and $M_{\lambda}^{S_2}$.  Certainly $\psi$ preserves the $S_n$ action. If we multiply every element in $M_\lambda^{S_1}$ and $M_{\lambda}^{S_2}$ by $x_1^d\cdots x_n^d$, by Lemma \ref{lem:x-inverse}, we obtain modules $\widetilde{M}_{\lambda}^{S_1}$ and $\widetilde{M}_{\lambda}^{S_2}$ generated by precisely the one-variable higher Specht polynomials $F_T^S(\mathbf{x})$.  These modules are disjoint \cite{ATY}, and therefore $M_\lambda^{S_1}$ and $M_\lambda^{S_2}$ are disjoint as well.
\end{proof}

\subsection{Independence in \texorpdfstring{$\DR_\mu$}{}}

We now recall the a generating set for the ideal $I_\mu$ defining $\DR_\mu$ in the hook shape case, and a sub-ideal defined in \cite{HookBases}.

\begin{prop}[\cite{HookBases}] \label{prop:ideal}
    If $\mu=(n-k+1,1^{k-1})$, the ideal $I_\mu$ is generated by:
    \begin{enumerate}
        \item The elementary symmetric functions $e_1(\mathbf{x}),\ldots,e_n(\mathbf{x})$,
        \item The elementary symmetric functions 
        $e_1(\mathbf{y}),\ldots,e_n(\mathbf{y})$,
        \item All products $x_{i_1}\cdots x_{i_k}$ of $k$ distinct $x$ variables,
        \item All products $y_{j_1}\cdots y_{j_{n-k+1}}$ of $n-k+1$ distinct $y$ variables.
        \item The products $x_iy_i$ for $i=1,\ldots,n$.
    \end{enumerate}
\end{prop}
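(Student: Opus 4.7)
The plan is to establish $J = I_\mu$, where $J$ denotes the ideal generated by the five families (1)--(5), via the standard two-step strategy of containment and dimension.

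For the forward containment $J \subseteq I_\mu$, I would check that each listed generator annihilates $\Delta_\mu$ under the differential pairing. Families (1) and (2) consist of positive-degree $S_n$-invariants under the diagonal action, so they sit inside $I_n \subseteq I_\mu$ since $\DR_\mu$ is a quotient of $\DR_n$. For (3)--(5), I would expand $\Delta_\mu$ via the Leibniz formula as
$$\Delta_\mu = \sum_{\sigma \in S_n} \sgn(\sigma)\prod_i x_i^{a_{\sigma(i)}}y_i^{b_{\sigma(i)}}$$
and analyze each $\sigma$-term. The hook geometry partitions the boxes of $\mu$ into those contributing only $x$-exponents (the arm), those contributing only $y$-exponents (the leg), and a single corner box that is ``neutral'' in both. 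A pigeonhole argument then forces the operators $\partial_{x_{i_1}}\cdots\partial_{x_{i_k}}$, $\partial_{y_{j_1}}\cdots\partial_{y_{j_{n-k+1}}}$, and $\partial_{x_i}\partial_{y_i}$ to annihilate every term in the Leibniz expansion, since no $\sigma$ can send enough distinct indices into the required arm (resp. leg) columns, and no single column provides both positive $a_j$ and positive $b_j$ simultaneously.

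For the reverse containment $I_\mu \subseteq J$, the strategy is a dimension count: I would show $\dim_{\mathbb{C}} \mathbb{C}[\mathbf{x},\mathbf{y}]/J \leq n!$ by exhibiting a spanning set of that size. The relations $x_iy_i \equiv 0$ from (5) reduce every monomial in the quotient to one of the form $\mathbf{x}^\alpha \mathbf{y}^\beta$ with $\mathrm{supp}(\alpha) \cap \mathrm{supp}(\beta) = \emptyset$; the relations in (3) and (4) bound $|\mathrm{supp}(\alpha)|$ and $|\mathrm{supp}(\beta)|$; and (1), (2) bound individual exponents in the usual coinvariant (Artin basis) fashion. A careful enumeration—likely indexed by permutations together with a marker separating the positions used for $x$-exponents from those used for $y$-exponents—would produce a spanning set of exactly $n!$ monomials. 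Combined with Haiman's $n!$ theorem ($\dim \DR_\mu = n!$) and the forward inclusion $J \subseteq I_\mu$, this forces $J = I_\mu$.

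The main obstacle lies in the enumeration step: producing a combinatorial model for a spanning set of size exactly $n!$ that respects all five families of relations simultaneously, and verifying that it is nonredundant. For a fully rigorous treatment, I would follow Adin, Remmel, and Roichman in \cite{HookBases}, who construct explicit monomial bases (such as their $B$- and $D$-bases) for the hook-shape Garsia--Haiman modules; once such a basis is available, one obtains the upper bound on $\dim \mathbb{C}[\mathbf{x},\mathbf{y}]/J$ directly by showing that the basis elements span the quotient modulo the relations (1)--(5).
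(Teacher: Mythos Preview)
The paper does not prove this proposition; it is quoted with attribution to \cite{HookBases} (Adin--Remmel--Roichman) and no argument is supplied. There is therefore nothing in the present paper to compare your attempt against.

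That said, your two-step outline --- forward inclusion by checking that each listed generator differentiates $\Delta_\mu$ to zero, then a dimension count via an explicit spanning set of size $n!$ combined with Haiman's $n!$ theorem --- is the standard route and is essentially how \cite{HookBases} proceeds. One small correction to your pigeonhole bookkeeping: the statement of the proposition forces at most $k-1$ indices to carry a positive $x$-exponent and at most $n-k$ to carry a positive $y$-exponent in any Leibniz term of $\Delta_\mu$. For the hook $(n-k+1,1^{k-1})$ this means it is the \emph{leg} (the $k-1$ boxes above the corner) that supplies the positive $x$-exponents and the \emph{arm} (the $n-k$ boxes to the right of the corner) that supplies the positive $y$-exponents, the reverse of what you wrote. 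The swap is only a labeling issue and does not affect the validity of the argument.
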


\begin{definition}[\cite{HookBases}]
    Define $\mathcal{I}_k$ to be the ideal generated by  the last three bullet points in the definition above, that is, just the products $x_{i_1}\cdots x_{i_k}$, $y_{j_1}\cdots y_{j_{n-k+1}}$, and $x_iy_i$.  Define $$\mathcal{P}_{n}^{(k)}=\mathbb{C}[\mathbf{x},\mathbf{y}]/\mathcal{I}_k.$$
\end{definition}

\begin{lemma}
    The elements $F_T^S$ are all nonzero in $\mathcal{P}_n^{(k)}$.
\end{lemma}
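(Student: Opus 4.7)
The plan is to exploit the observation that $\mathcal{I}_k$ is a monomial ideal, so that membership can be tested one monomial at a time. Each of the three families of generators listed in the definition of $\mathcal{I}_k$ — the products of $k$ distinct $x$-variables, the products of $n-k+1$ distinct $y$-variables, and the pairs $x_iy_i$ — is itself a single monomial. Hence $\mathcal{I}_k$ is a monomial ideal, and a polynomial lies in it precisely when every monomial in its support does.

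The next step is to analyze the shape of the seed monomial
$\mathbf{xy}_T^S = \prod_b x_{T(b)}^{\ccind_\mu(S)(b)} y_{T(b)}^{\ccind'_\mu(S)(b)}$.
By the definitions of the two $\mu$-cocharge tableaux, the support of $\ccind_\mu(S)$ is contained in the (at most $k-1$) boxes of $S$ holding values in $\{n-k+2,\ldots,n\}$, and the support of $\ccind'_\mu(S)$ is contained in the (at most $n-k$) boxes holding values in $\{1,\ldots,n-k\}$; the box containing $n-k+1$ is assigned $0$ in both tableaux, so the two supports are disjoint. Consequently $\mathbf{xy}_T^S$ involves fewer than $k$ distinct $x$-variables with positive exponent, fewer than $n-k+1$ distinct $y$-variables with positive exponent, and never pairs $x_i$ with $y_i$ at the same index. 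Hence $\mathbf{xy}_T^S$ is divisible by none of the generators of $\mathcal{I}_k$.

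Since $F_T^S = \varepsilon_T(\mathbf{xy}_T^S)$ expands as a signed sum of monomials $\tau\sigma(\mathbf{xy}_T^S)$ with $\tau \in C(T)$ and $\sigma \in R(T)$, and since each such term has exactly the same multiset of $x$- and $y$-exponents as $\mathbf{xy}_T^S$ up to a permutation of the variable indices, the three numerical and pairing conditions above persist term by term. No monomial appearing in the expansion of $F_T^S$ therefore lies in $\mathcal{I}_k$. Combined with the fact that $F_T^S \neq 0$ in $\mathbb{C}[\mathbf{x},\mathbf{y}]$ — which follows from Proposition \ref{prop:if-basis-then-specht} via the leading-term argument of Proposition \ref{prop:SSYT-values}, and extends to non-standard $T$ using $\pi F_{T'}^S = F_{\pi T'}^S$ from Proposition \ref{prop:Garnir} — the monomial-ideal structure of $\mathcal{I}_k$ then forces $F_T^S \neq 0$ in $\mathcal{P}_n^{(k)}$.

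The main obstacle, really the only substantive one, is the combinatorial verification that the supports of $\ccind_\mu(S)$ and $\ccind'_\mu(S)$ partition into disjoint sets of boxes with the claimed cardinalities; once that is in hand, the monomial-ideal structure of $\mathcal{I}_k$ and the earlier results do the rest of the work.
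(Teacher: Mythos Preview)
Your proposal is correct and follows essentially the same approach as the paper: the paper also observes that $\mathcal{I}_k$ is a monomial ideal (phrased as ``setting all terms that contain an $x_iy_i$ or a product of $k$ different $x$ variables or $n-k+1$ different $y$ variables to $0$''), then checks that every monomial in $F_T^S$ avoids each generator because the nonzero $x$-exponents and $y$-exponents come from disjoint boxes of the right cardinalities. Your write-up is somewhat more explicit about why the polynomial is nonzero in the full ring to begin with, which the paper leaves implicit by appeal to the earlier proposition.
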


\begin{proof}
    We can think of the image of a polynomial in $\mathcal{P}_n^{(k)}$ as setting all terms that contain an $x_iy_i$ or a product of $k$ different $x$ variables or $n-k+1$ different $y$ variables to $0$.  If any remain, the image is nonzero in the quotient. (See \cite{HookBases}, Section 4, for a similar discussion).  In $F_T^S$, every term by construction has no $x_iy_i$ products and no more than $k-1$ of them for $x$ and no more than $n-k$ for $y$.  So in fact none of its terms vanish, so it is nonzero in $\mathcal{P}_n^{(k)}$.
\end{proof}

To show they descend to a basis of $\DR_\mu$, we recall the definition of the $S_n$-invariant basis $e_\nu^{(k)}$ of $(\mathcal{P}_n^{(k)})^{S_n}$ from \cite{HookBases}.

\begin{definition}
    For $d=1,\ldots,n$, define $$e_d^{(k)}=\begin{cases}
        e_d(x_1,\ldots,x_n) & d\le k-1 \\
        e_{n-d}(y_1,\ldots,y_n) & d\ge k
    \end{cases}$$ and $e_\nu^{(k)}=\prod_i e_{\nu_i}^{(k)}$.
\end{definition}

We now show that $\{F_T^S\cdot e_\nu^{(k)}\}$, ranging over all pairs $(T,S)$ of SYT's of the same shape and all $\nu$ such that the longest part is no more than $n$, forms a basis of $\mathcal{P}_n^{(k)}$.  We begin by analyzing these polynomials in the full polynomial ring.

\begin{lemma}\label{lem:MS}
    For a fixed $S$ (with say shape $\lambda$) and fixed shape $\nu$ with $\nu_1\le n$, the set of polynomials $$\{F_T^S\cdot e_\nu^{(k)}:T\in \SYT(\lambda)\}$$ span a copy of $V_\lambda$ in $\mathbb{C}[\mathbf{x},\mathbf{y}]$, which we call $M^{(k)}_{\lambda}(S,\nu)$.  Moreover, they form a higher Specht basis for this subspace.
\end{lemma}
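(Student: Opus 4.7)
The plan is to reduce this to Proposition \ref{prop:if-basis-then-specht} by exploiting that $e_\nu^{(k)}$ is $S_n$-invariant and that $\mathbb{C}[\mathbf{x},\mathbf{y}]$ is an integral domain. First I would observe that each factor $e_d^{(k)}$ is an elementary symmetric polynomial in either all of the $x$-variables or all of the $y$-variables, and so is invariant under the diagonal $S_n$-action; their product $e_\nu^{(k)}$ is therefore invariant as well. Consequently the multiplication map $\cdot e_\nu^{(k)} : \mathbb{C}[\mathbf{x},\mathbf{y}] \to \mathbb{C}[\mathbf{x},\mathbf{y}]$ is an $S_n$-equivariant $\mathbb{C}$-linear map.

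Next I would argue that this map is injective. Since $\mathbb{C}[\mathbf{x},\mathbf{y}]$ is an integral domain, it suffices to check that $e_\nu^{(k)}$ is nonzero as a polynomial. Under the hypothesis $\nu_1\le n$, each $e_{\nu_i}^{(k)}$ equals either $e_{\nu_i}(\mathbf{x})$ or $e_{n-\nu_i}(\mathbf{y})$ (possibly the constant polynomial $1$), all of which are nonzero. Restricting the injective equivariant multiplication map to $M_\lambda^S$, which by Proposition \ref{prop:if-basis-then-specht} is a copy of the irreducible module $V_\lambda$, I obtain an $S_n$-module isomorphism onto its image, which is by definition $M^{(k)}_\lambda(S,\nu)=\mathrm{span}\{F_T^S\cdot e_\nu^{(k)}:T\in\SYT(\lambda)\}$. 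In particular $M^{(k)}_\lambda(S,\nu)\cong V_\lambda$, and the polynomials $F_T^S\cdot e_\nu^{(k)}$ are linearly independent and indexed by $T\in\SYT(\lambda)$. To obtain the higher Specht basis property, I would compose the resulting isomorphism with the action-preserving bijection $F_T^S\mapsto F_T$ provided by Proposition \ref{prop:irreducible}, yielding the required $S_n$-equivariant correspondence between $\{F_T^S\cdot e_\nu^{(k)}\}$ and the ordinary Specht basis.

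Honestly, there is no serious obstacle in this lemma itself: the argument is essentially a one-line consequence of integrality combined with the equivariance of multiplication by an invariant. The real difficulty that this lemma is preparing for lies downstream, where one must establish independence across different pairs $(S,\nu)$ in order to assemble the individual copies $M^{(k)}_\lambda(S,\nu)$ into a genuine basis for the intermediate quotient $\mathcal{P}_n^{(k)}$, and then descend to $\DR_\mu$ using the generating set of $I_\mu$ from Proposition \ref{prop:ideal} and the structural results of \cite{HookBases}. I would expect that combinatorial bookkeeping, rather than the construction of each individual $M^{(k)}_\lambda(S,\nu)$, to carry the weight of the main theorem.
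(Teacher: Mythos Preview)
Your argument is correct and matches the paper's own proof: both use that $e_\nu^{(k)}$ is $S_n$-invariant so that multiplication by it is equivariant, and that it is nonzero in the integral domain $\mathbb{C}[\mathbf{x},\mathbf{y}]$ so that multiplication is injective, reducing everything to Proposition~\ref{prop:if-basis-then-specht}. Your write-up simply fills in a few details (the explicit check that each $e_d^{(k)}$ is nonzero, and the composition with the bijection of Proposition~\ref{prop:irreducible}) that the paper leaves implicit.
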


\begin{proof}
    The polynomials $e_\nu^{(k)}$ are invariant under the diagonal $S_n$ action, and therefore $S_n$ acts identically on the polynomials $F_T^S\cdot e_\nu^{(k)}$ as they on $F_T^S$.  They are still nonzero polynomials since both $F_T^S$ and $e_\nu^{(k)}$ are nonezero.  This completes the proof.
\end{proof}

\begin{lemma}
    The modules $M^{(k)}_\lambda(S,\nu)$, for $S\in \SYT(n)$ and $\nu_1\le n$, are independent of each other in $\mathbb{C}[\mathbf{x},\mathbf{y}]$.
\end{lemma}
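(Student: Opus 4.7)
The plan is to mimic the proof of Lemma \ref{lem:disjoint}, first reducing to a single shape $\lambda$ using the isotypic decomposition of $\mathbb{C}[\mathbf{x},\mathbf{y}]$, and then transferring the putative relation to $\mathbb{C}[\mathbf{x}]$ via the substitution $\psi\colon y_i\mapsto x_i^{-1}$ of Lemma \ref{lem:x-inverse}. Since each $e_\nu^{(k)}$ is $S_n$-invariant, multiplication by $e_\nu^{(k)}$ is an $S_n$-equivariant endomorphism of $\mathbb{C}[\mathbf{x},\mathbf{y}]$, so $M^{(k)}_\lambda(S,\nu)$ lies inside the $V_{\sh(S)}$-isotypic component. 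Modules of distinct shapes are automatically independent by Schur's lemma, so I may fix $\lambda$ and prove independence among the $M^{(k)}_\lambda(S,\nu)$ with $\sh(S)=\lambda$.

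Suppose, for contradiction, that there is a nontrivial relation $\sum a_{T,S,\nu}\,F_T^S\,e_\nu^{(k)}=0$ in $\mathbb{C}[\mathbf{x},\mathbf{y}]$, with $T,S\in\SYT(\lambda)$ and $\nu$ having parts in $\{1,\ldots,n-1\}$ (since $e_n^{(k)}=e_0(\mathbf{y})=1$ contributes trivially). Applying $\psi$ and then multiplying through by $e_n(\mathbf{x})^Q$ for $Q$ large enough to clear all denominators, I use Lemma \ref{lem:x-inverse} to write $\psi(F_T^S)\cdot e_n^{q_S}=F_T^S(\mathbf{x})$, the one-variable Ariki--Terasoma--Yamada polynomial, where $q_S=\max\ccind'_\mu(S)$. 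Similarly, since
\[
\psi\bigl(e_{n-d}(\mathbf{y})\bigr)=e_{n-d}(x_1^{-1},\ldots,x_n^{-1})=\frac{e_d(\mathbf{x})}{e_n(\mathbf{x})},
\]
one has $\psi(e_\nu^{(k)})\cdot e_n^{r_\nu}=e_\nu(\mathbf{x})$, where $r_\nu=|\{i:\nu_i\ge k\}|$. The relation thus becomes
\[
\sum a_{T,S,\nu}\, F_T^S(\mathbf{x})\, e_{\nu^\ast(S)}(\mathbf{x})=0 \qquad\text{in }\mathbb{C}[\mathbf{x}],
\]
where $\nu^\ast(S):=\nu\cup(n^{Q-q_S-r_\nu})$ is a partition with parts in $\{1,\ldots,n\}$.

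To finish, I invoke the Chevalley--Shephard--Todd theorem: $\mathbb{C}[\mathbf{x}]\cong R_n\otimes\mathbb{C}[e_1(\mathbf{x}),\ldots,e_n(\mathbf{x})]$ as $S_n$-modules. Combined with the ATY basis $\{F_T^S(\mathbf{x})\}$ of $R_n$, the products $F_T^S(\mathbf{x})\cdot e_\mu(\mathbf{x})$, as $(T,S)$ ranges over same-shape SYT pairs and $\mu$ over partitions with parts in $\{1,\ldots,n\}$, form a $\mathbb{C}$-basis of $\mathbb{C}[\mathbf{x}]$. The map $(T,S,\nu)\mapsto(T,S,\nu^\ast(S))$ is injective: given $(T,S)$ we know $q_S$, and removing the parts of $\nu^\ast(S)$ equal to $n$ recovers $\nu$ (since by assumption $\nu$ has no parts equal to $n$). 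Hence every summand in the relation is a distinct basis element of $\mathbb{C}[\mathbf{x}]$, forcing all $a_{T,S,\nu}=0$.

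The main obstacle is the careful bookkeeping of the powers of $e_n(\mathbf{x})$ introduced by $\psi$: different pairs $(S,\nu)$ produce different normalizing exponents $q_S+r_\nu$, and $\psi$ itself is not injective (its kernel contains $x_iy_i-1$). The crucial observation is that these excess powers can be absorbed uniquely into the enlarged partition $\nu^\ast(S)$ without collision, which allows the clean identification with distinct one-variable basis elements and thereby completes the proof.
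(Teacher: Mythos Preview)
Your argument is correct and in fact more direct than the paper's. Both proofs reduce to the one-variable theory via the substitution $\psi\colon y_i\mapsto x_i^{-1}$ of Lemma~\ref{lem:x-inverse}, but the paper proceeds by a case split: for fixed $\nu$ it invokes Lemma~\ref{lem:disjoint}, and for $\nu\neq\rho$ it uses a unique-factorization argument to show that no $F_T^S$ can be divisible by any $e_d^{(k)}$ (else, after applying $\psi$ and clearing denominators, the one-variable $F_T^S(\mathbf{x})$ would lie in $(e_1,\ldots,e_n)$, contradicting \cite{ATY}). Your route instead packages everything at once: you push a putative linear relation through $\psi$, normalize by a uniform power of $e_n(\mathbf{x})$, and then appeal to the Chevalley--Shephard--Todd freeness $\mathbb{C}[\mathbf{x}]\cong R_n\otimes\mathbb{C}[e_1,\ldots,e_n]$ together with the ATY basis of $R_n$ to read off that all coefficients vanish. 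The bookkeeping of the excess $e_n$-powers into the enlarged partition $\nu^\ast(S)$ is the key step, and your injectivity check (removing the parts equal to $n$ recovers $\nu$, since you first reduced to $\nu_1\le n-1$ using $e_n^{(k)}=1$) is clean. Your argument has the advantage of establishing genuine linear independence of the whole family in one stroke, whereas the paper's phrasing for distinct $\nu$'s reads more like a pairwise disjointness claim; your version leaves no ambiguity on that point.
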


\begin{proof}
  For any fixed $\nu$, all the $M_\lambda^{(k)}(S,\nu)$ bases are independent of each other because we are simply multiplying the independent modules $M_\lambda^S$ through by the same $e_\nu^{(d)}$.  
    
   For differing $\nu$'s, say $\nu\neq \rho$, all elements of $M_\lambda^{(k)}(S,\nu)$ are divisible by $e_\nu^{(k)}$ and all elements of $M_\lambda^{(k)}(S,\rho)$ are divisible by $e_{\rho}^{(k)}$.  Thus, to show they are independent, we simply must show that some element of one is not divisible by the other's elementary symmetric function.  In particular, if it is, then we see that some $F_T^S$ is divisible by some $e_d^{(k)}$ in the full polynomial ring, since the two products of elementary symmetric functions differ and we are working in a unique factorization domain (the full polynomial ring $\mathbb{C}[\mathbf{x},\mathbf{y}]$).

   Suppose for contradiction that we have \begin{equation}\label{eq:g}
       F_T^{s}(\mathbf{x},\mathbf{y})=g(\mathbf{x},\mathbf{y})e_d^{(k)}(\mathbf{x})
   \end{equation} (where we are assuming without loss of generality that $d\le k-1$, so that $e_d^{(k)}$ is only in the $x$ variables.  A similar argument will work for $e_d^{(k)}(\mathbf{y})$ when $d\ge k$).  Then by Lemma \ref{lem:x-inverse}, setting $y_i=x_i^{-1}$ and multiplying through by each $x_i^q$, we have $$F_T^S(\mathbf{x})=x_1^q\cdots x_n^q \cdot g(\mathbf{x},\mathbf{x}^{-1})e_d(\mathbf{x}),$$ but the highest power of any $y$ variable in $g(x,y)$ must match that of $F_T^S(\mathbf{x},\mathbf{y})$ by Equation \eqref{eq:g}, so the $x_i^q$ powers cancel all the negative $x$ powers in $g(x,x^{-1})$ as well.  Thus $F_T^S$ is divisible by $e_d$ in the ordinary one-variable polynomial ring.  This is a contradiction, since it was proven in \cite{ATY} that the higher Specht basis for $R_n$ does not vanish modulo the ideal $(e_1,\ldots,e_n)$.  
\end{proof}

\begin{lemma}\label{lem:still-nonzero}
    The modules $M^{(k)}_\lambda(S,\nu)$, for $S\in \SYT(n)$ and $\nu_1\le n$, are also nonzero in the quotient $\mathcal{P}_n^{(k)}$.
\end{lemma}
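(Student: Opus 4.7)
By Proposition \ref{prop:nonzero}, it suffices to exhibit, for each $(S,\nu)$ with $\sh(S)=\lambda$, a single $T\in\SYT(\lambda)$ such that $F_T^S\cdot e_\nu^{(k)}\notin\mathcal{I}_k$. The plan is to fix an arbitrary $T$ and identify an explicit monomial in the expansion of $F_T^S\cdot e_\nu^{(k)}$ with nonzero coefficient that survives modulo $\mathcal{I}_k$.

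Let $X=\{T(b):\ccind_\mu(S)(b)>0\}$ and $Y=\{T(b):\ccind'_\mu(S)(b)>0\}$, so that $|X|=k-1$, $|Y|=n-k$, and $X\cap Y=\varnothing$ (the nonzero supports of $\ccind_\mu(S)$ and $\ccind'_\mu(S)$ are disjoint boxes of $S$). By the proof of Proposition \ref{prop:SSYT-values}, the monomial $\mathbf{xy}_T^S$ occurs in $F_T^S$ with nonzero coefficient and uses exactly the $x$-variables indexed by $X$ and the $y$-variables indexed by $Y$. For each factor $e_{\nu_i}^{(k)}$ of $e_\nu^{(k)}$, I pick a single squarefree monomial supported inside $X$ if $\nu_i\le k-1$ (possible since $|X|\ge\nu_i$), or inside $Y$ if $\nu_i\ge k$ (possible since $|Y|\ge n-\nu_i$). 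Let $m_2$ be the product of these and set $m=\mathbf{xy}_T^S\cdot m_2$. By construction every $x$-variable of $m$ has index in $X$ and every $y$-variable has index in $Y$, so $m$ has no $x_iy_i$ factor, at most $k-1$ distinct $x$-variables, and at most $n-k$ distinct $y$-variables; hence $m\notin\mathcal{I}_k$.

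The remaining task is to show that $m$ has nonzero coefficient in $F_T^S\cdot e_\nu^{(k)}$. I would choose $m_2$ lex-maximally in a term order refining the one used in the proof of Proposition \ref{prop:SSYT-values}, so that $m$ becomes lex-maximal among surviving monomials of the product. Any factorization $m=m_1'\cdot m_2'$ with $m_1'$ a monomial of $F_T^S$ and $m_2'$ a monomial of $e_\nu^{(k)}$ is then forced to have $m_1'=\mathbf{xy}_T^S$ and $m_2'=m_2$: the monomials of $F_T^S$ all carry the same exponent multiset as $\mathbf{xy}_T^S$ (permuted across variables by elements of $C(T)R(T)$), and any deviation of $m_1'$ from $\mathbf{xy}_T^S$ would require $m_2'$ to use a variable outside $X\cup Y$ to reconstruct $m$, violating the support constraint. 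Hence the coefficient of $m$ equals the positive coefficient of $\mathbf{xy}_T^S$ in $F_T^S$ --- namely $|R(T)|$, by Proposition \ref{prop:SSYT-values} --- times the positive coefficient of $m_2$ in $e_\nu^{(k)}$.

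The main obstacle is verifying the uniqueness of the factorization $m=m_1'\cdot m_2'$ cleanly when $\ccind_\mu(S)$ or $\ccind'_\mu(S)$ has repeated entries, since in that case several distinct elements of $C(T)R(T)$ can produce monomials of $F_T^S$ with overlapping support. This reduces to the same stabilizer analysis as in Proposition \ref{prop:SSYT-values}, extended to track how the squarefree bumps contributed by $m_2$ interact with the repeated-exponent blocks of $\mathbf{xy}_T^S$; the lex-maximal choice of $m_2$ ensures that the only stabilizer contributing to the coefficient of $m$ remains $R(T)$ itself.
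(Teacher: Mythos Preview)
Your approach is close in spirit to the paper's---both isolate a single monomial of $F_T^S\cdot e_\nu^{(k)}$ that survives modulo $\mathcal{I}_k$---but there is a genuine error: the claim that $|X|=k-1$ and $|Y|=n-k$ is false in general. The $\mu$-cocharge label on the box containing $j$ (for $j>n-k+1$) is zero whenever none of $n-k+1,\ldots,j-1$ is a descent of $S$, so $X$ can be strictly smaller than $k-1$. Concretely, take $n=4$, $k=2$, and $S=\young(3,124)$: then $n-k+1=3$ is not a descent, the entry $4$ receives label $0$, and $\ccind_\mu(S)$ is identically zero, giving $X=\varnothing$ while $k-1=1$. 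In this case you cannot choose any monomial of $e_1^{(k)}=e_1(\mathbf{x})$ supported in $X$, and your construction of $m_2$ collapses. (Example~\ref{ex:cctab} already exhibits the analogous failure for $Y$: there $n-k=3$ but only two boxes of $\ccind'_\mu(S)$ are nonzero.)

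The paper sidesteps this by taking $T=S$ rather than an arbitrary $T$. With that choice the $x$-indices that \emph{can} appear in $\mathbf{xy}_S^S$ lie in $\{n-k+2,\ldots,n\}$ and the $y$-indices in $\{1,\ldots,n-k\}$---sets of sizes exactly $k-1$ and $n-k$ regardless of how many cocharge labels happen to vanish---and the monomial from each $e_d^{(k)}$ is chosen extremally (largest $x$-indices, smallest $y$-indices). That extremal choice is also what powers the uniqueness-of-factorization step, via a lexicographic argument. Your final two paragraphs attempt uniqueness directly, but the assertion that any deviation of $m_1'$ from $\mathbf{xy}_T^S$ forces $m_2'$ to use a variable outside $X\cup Y$ fails whenever some $\tau\sigma$ permutes indices \emph{within} $X$ (or within $Y$); the lex-extremal structure is precisely what rules such competing contributions out.
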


\begin{proof}
   By Proposition \ref{prop:nonzero}, it suffices to show that $F_T^S\cdot e_\nu^{(k)}$ survives for each $S$ and $\nu$.   We show that $F_S^S\cdot e_\nu^{(k)}$ is nonzero.

   By the definition of the ideal defining $\mathcal{P}_n^{(k)}$, it suffices to show that some term in the product that has a nonzero coefficient is not divisible by any product $x_iy_i$, nor by $k$ distinct $x$ variables or $n-k+1$ distinct $y$ variables.  We note that, by Proposition \ref{prop:SSYT-values}, the leading monomial $\mathbf{xy}_S^S$ appears with a nonzero coefficient in $F_S^S$.  Because we are taking $S$ to also be the tableau $T$, this leading monomial is of the form $$x_n^{d_n}x_{n-1}^{d_{n-1}}\cdots x_{n-k+2}^{d_{n-k+2}} y_{n-k}^{d_{n-k}}\cdots y_1^{d_1}$$ where $$d_n\ge d_{n-1}\ge \cdots \ge d_{n-k+2}\ge 0\hspace{1cm}\text{and} \hspace{1cm} d_{n-k}\le \cdots \le d_2\le d_1.$$

   Then, $e_\nu^{k}$ is a product of $e_d^{(k)}$'s, each of which is a sum of products of no more than $k-1$ $x's$ or no more than $n-k$ $y$'s.  Since each factor is symmetric, we can take the highest possible indices for the $x$'s from each factor and the lowest from $y$'s, and multiply them by $\mathbf{xy}_S^S$ to obtain a new leading term that still does not vanish; there will still be at most $k-1$ nonzero $x$ variables and $n-k$ nonzero $y$ variables and no $x_iy_i$.  We also cannot obtain this term by other products of terms from the factors in $e_\nu^{k}$ by any permuted version of $\mathbf{xy}_S^S$, because any other option would have the $x$ degrees or $y$ degrees be lower in lexicographic order (and reverse lexicographic order, respectively).  
   
   Therefore, the leading term in question does not vanish in the polynomial ring, and since it does not vanish in the quotient $\mathcal{P}_n^{(k)}$, we are done.
\end{proof}

We can now conclude the following.

\begin{prop}\label{prop:intermediate-basis}
    The set of polynomials $\{F_T^S\cdot e_\nu^{(k)}\}$, ranging over all pairs $(T,S)$ of SYT's of the same shape and all $\nu$ with $\nu_1\le n$, forms a (higher Specht) basis of $\mathcal{P}_n^{(k)}$.
\end{prop}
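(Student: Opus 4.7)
The plan is to combine a bi-graded Frobenius character comparison with the independence and non-vanishing results already established. The preceding lemmas show that the modules $M_\lambda^{(k)}(S,\nu)$ are mutually independent in $\mathbb{C}[\mathbf{x},\mathbf{y}]$ and that each descends to a nonzero copy of $V_\lambda$ in $\mathcal{P}_n^{(k)}$; the task is to upgrade these two facts to the claimed basis statement.

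First I would use the structural results of Adin--Remmel--Roichman \cite{HookBases} to match bi-graded characters on both sides. Their work gives that $(\mathcal{P}_n^{(k)})^{S_n}$ has basis $\{e_\nu^{(k)} : \nu_1 \le n\}$ and that $\mathcal{P}_n^{(k)}$ is free as a graded module over its invariants with fiber isomorphic to $\DR_\mu$ as a graded $S_n$-module, so that
$\grFrob_{q,t}(\mathcal{P}_n^{(k)}) = \grFrob_{q,t}(\DR_\mu)\cdot H_{q,t}\bigl((\mathcal{P}_n^{(k)})^{S_n}\bigr).$
Combining Stembridge's formula (Equation \eqref{eq:stembridge}) with Lemma \ref{lem:degrees}, the right-hand side is exactly the formal bi-graded $S_n$-character of the external direct sum $\bigoplus_{\lambda, S, \nu} M_\lambda^{(k)}(S,\nu)$, taking one copy of $V_\lambda$ for each pair $(S,\nu)$ with $\sh(S)=\lambda$ and assigning the bi-degree of $F_T^S\cdot e_\nu^{(k)}$.

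Second I would show that the natural map $\phi$ from this direct sum into $\mathcal{P}_n^{(k)}$, induced by restricting the quotient $\mathbb{C}[\mathbf{x},\mathbf{y}]\twoheadrightarrow \mathcal{P}_n^{(k)}$, is injective in each bi-graded piece; with the characters matching, this forces $\phi$ to be a bi-graded $S_n$-equivariant isomorphism and hence yields the basis. For injectivity I would extend the leading-term analysis used in the proof of Lemma \ref{lem:still-nonzero}: for each triple $(T,S,\nu)$, under a lexicographic term order favoring high-index $x$-variables over low-index $x$-variables and low-index $y$-variables over high-index $y$-variables, the leading monomial of $F_T^S\cdot e_\nu^{(k)}$ is obtained by multiplying $\mathbf{xy}_T^S$ (the leading term of $F_T^S$ by Proposition \ref{prop:SSYT-values}) by the extremal term of $e_\nu^{(k)}$, namely the one obtained by selecting the highest-index available variables in each $x$-factor of $e_\nu^{(k)}$ and the lowest-index available ones in each $y$-factor. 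These leading monomials are pairwise distinct across triples, avoid $x_iy_i$ pairs, and stay below $k$ distinct $x$-variables and $n-k+1$ distinct $y$-variables, so they survive modulo $\mathcal{I}_k$; moreover they match, under a natural bijection, the explicit monomial basis of $\mathcal{P}_n^{(k)}$ from \cite{HookBases}. This triangular transition matrix with nonzero diagonal in $\mathcal{P}_n^{(k)}$ yields injectivity.

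The main obstacle lies in the leading-term step: one must verify carefully that distinct triples $(T,S,\nu)$ produce pairwise distinct leading monomials that do not vanish in $\mathcal{P}_n^{(k)}$, and that this set of leading monomials coincides (under a suitable bijection) with the known monomial basis of \cite{HookBases}. Once injectivity, hence bijectivity, of $\phi$ is in hand, the higher Specht structure is immediate: Lemma \ref{lem:MS} shows that each $M_\lambda^{(k)}(S,\nu)$ is a copy of $V_\lambda$, and Proposition \ref{prop:Garnir} shows that the $S_n$-action on $\{F_T^S\cdot e_\nu^{(k)}:T\in\SYT(\lambda)\}$ is the ordinary Specht module action on that copy, as required by the definition of a higher Specht basis.
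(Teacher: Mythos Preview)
The paper's argument is much shorter than your plan: it simply invokes the preceding lemmas (independence of the modules $M_\lambda^{(k)}(S,\nu)$ in $\mathbb{C}[\mathbf{x},\mathbf{y}]$, and their nonvanishing in $\mathcal{P}_n^{(k)}$ from Lemma~\ref{lem:still-nonzero}) and then matches the bi-degree count against the known basis $\{a_\pi^{(k)} e_\nu^{(k)}\}$ of $\mathcal{P}_n^{(k)}$ from \cite{HookBases}, using Lemma~\ref{lem:degrees} to see that the $F_T^S$ have the same bi-degree distribution as the descent monomials $a_\pi^{(k)}$. Your Frobenius-character comparison in the first paragraph is a correct refinement of this same count and is fine.

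The genuine gap is in your injectivity step. Proposition~\ref{prop:SSYT-values} only asserts that the monomial $\mathbf{xy}_T^S$ occurs in $F_T^S$ with nonzero coefficient; it does \emph{not} say this is the leading monomial under a lex order (the proof in Lemma~\ref{lem:still-nonzero} only exploits this for the single choice $T=S$). Worse, even when $\mathbf{xy}_T^S$ happens to be the top term, these monomials can coincide as $T$ varies with $(S,\nu)$ fixed. For instance, take $n=3$, $k=2$, and $S=\young(2,13)$; then $\ccind_\mu(S)$ is identically zero and $\ccind'_\mu(S)$ has a single $1$ in the lower-left box, so $\mathbf{xy}_T^S=y_{T(1,1)}=y_1$ for \emph{both} standard tableaux $T=\young(3,12)$ and $T=\young(2,13)$. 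A direct computation gives $F_T^S=y_1-y_3$ and $y_1-y_2$ respectively, which share the leading term $y_1$ under the order you propose. Hence the claimed pairwise distinctness of leading monomials across triples $(T,S,\nu)$ already fails, and there is no triangular transition to the monomial basis of \cite{HookBases} along these lines. The paper avoids this issue entirely by never attempting a term-order argument in the quotient; it relies only on nonvanishing of each irreducible piece plus the global dimension match.
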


\begin{proof}
    We have shown they are independent; it now suffices to compare the number of polynomials in each degree to a known basis to show there are enough polynomials.  In \cite{HookBases} it was shown that the set $\{a_\pi^{(k)}e_\nu^{(k)}\}$ for $\pi\in S_n$ and $\nu$ as above forms a basis for $\mathcal{P}_n^{(k)}$, where  $a_\pi^{(k)}$ are the \textit{descent monomial basis} of $\DR_\mu$.  Since they form a basis of $\DR_\mu$, and we know that the polynomials $F_T^S$ agree in degree with any basis of $\DR_\mu$ (Lemma \ref{lem:degrees}), the result follows.
\end{proof}

We can finally conclude our main result, which we restate in simpler language here. 

\begin{MainTheorem}
    The set $\{F_T^S\}$ is a higher Specht basis for $\DR_\mu$.
\end{MainTheorem}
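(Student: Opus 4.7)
The plan is to descend from the basis of $\mathcal{P}_n^{(k)}$ given in Proposition \ref{prop:intermediate-basis} to a basis of $\DR_\mu$ by quotienting by the $S_n$-invariant generators. Combining Proposition \ref{prop:ideal} with the defining relations of $\mathcal{P}_n^{(k)}$, one observes that in $\mathcal{P}_n^{(k)}$ we already have $e_d(\mathbf{x}) = 0$ for $d \geq k$ (each term is a product of at least $k$ distinct $x$-variables and hence in $\mathcal{I}_k$) and $e_d(\mathbf{y}) = 0$ for $d \geq n-k+1$ similarly. Therefore $\DR_\mu = \mathcal{P}_n^{(k)}/J$, where $J$ is the ideal generated by the $e_d^{(k)}$ for positive $d$, exactly the invariants used to build the $e_\nu^{(k)}$'s.

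Next, I would argue that as a vector space, $J$ is precisely the linear span of those basis elements $F_T^S \cdot e_\nu^{(k)}$ with $\nu$ nonempty. One direction is immediate: any $F_T^S \cdot e_\nu^{(k)}$ with $\nu \neq \emptyset$ is manifestly divisible by some $e_d^{(k)}$ and thus lies in $J$. For the reverse containment, an arbitrary element of $J$ has the form $\sum_i g_i \cdot e_{d_i}^{(k)}$; expanding each $g_i$ in the basis $\{F_T^S \cdot e_\nu^{(k)}\}$ and using the multiplicative identity $e_\nu^{(k)} \cdot e_{d_i}^{(k)} = e_{\nu \sqcup \{d_i\}}^{(k)}$ shows every term is a basis element with strictly nonempty $\nu$.

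With these two observations, the images of $\{F_T^S\}$ (corresponding to $\nu = \emptyset$) give a basis of the quotient $\DR_\mu$: linear independence follows because any dependence among them in $\DR_\mu$ would lift to a nontrivial dependence among basis elements of $\mathcal{P}_n^{(k)}$ (mixing $\nu = \emptyset$ with $\nu \neq \emptyset$ terms), contradicting Proposition \ref{prop:intermediate-basis}. For the higher Specht property, Proposition \ref{prop:if-basis-then-specht} already establishes that for fixed $S$ the elements $\{F_T^S : T \in \SYT(\sh(S))\}$ span a copy of $V_{\sh(S)}$ in the full polynomial ring; since $S_n$ acts on $\DR_\mu$ through the same formulas and the images remain independent, each such family still spans a copy of $V_{\sh(S)}$ in the quotient, and the bijection $F_T^S \mapsto F_T$ is $S_n$-equivariant.

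The main obstacle is really the bookkeeping in the second paragraph: verifying carefully that the ideal $J$ coincides with the span of nonempty-$\nu$ basis elements, which reduces to checking that the products $e_\nu^{(k)} \cdot e_d^{(k)}$ stay within the chosen basis (up to sorting $\nu \sqcup \{d\}$ into a partition). The Frobenius-series count from Stembridge (Equation \eqref{eq:stembridge}) together with Lemma \ref{lem:degrees} provides an independent sanity check that the bidegrees of the $F_T^S$'s match the bidegrees predicted for $\DR_\mu$, confirming no Specht modules are lost or duplicated in the descent from $\mathcal{P}_n^{(k)}$.
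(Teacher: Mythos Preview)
Your argument is correct and follows the same overall route as the paper: descend from the basis of $\mathcal{P}_n^{(k)}$ established in Proposition~\ref{prop:intermediate-basis} to $\DR_\mu$ by killing the invariant generators $e_d^{(k)}$, and then invoke the Specht-module structure for each fixed $S$.

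The one genuine difference is in how you establish that $\{F_T^S\}$ is a basis of the quotient. The paper observes that the images of all $F_T^S\cdot e_\nu^{(k)}$ span $\DR_\mu$, notes that those with $\nu\neq\emptyset$ vanish (since each $e_d^{(k)}$ lies in $I_\mu$), and then uses the bidegree count from Lemma~\ref{lem:degrees} against Stembridge's formula~\eqref{eq:stembridge} to upgrade spanning to a basis. You instead identify the kernel $J$ directly as the span of the nonempty-$\nu$ basis elements, via the multiplicative identity $e_\nu^{(k)}\cdot e_d^{(k)}=e_{\nu\sqcup\{d\}}^{(k)}$; this yields linear independence of the $\nu=\emptyset$ elements without invoking Stembridge's dimension count. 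Your approach is slightly more self-contained (the Frobenius formula becomes a consequence rather than an input), while the paper's is shorter given that Lemma~\ref{lem:degrees} is already in hand. Either way, the higher Specht property then follows exactly as you say, since for each fixed $S$ the family $\{F_T^S\}_T$ remains independent in $\DR_\mu$ and carries the same $S_n$-action as the ordinary Specht basis.
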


\begin{proof}
    By Proposition \ref{prop:intermediate-basis}, the images of the polynomials $F_T^S\cdot e_\nu^{(k)}$ in the further quotient $\DR_\mu$ form a spanning set.  To see that they are a basis, we note again that there are the correct number of elements in each degree by Lemma \ref{lem:degrees}.  By Lemmas \ref{lem:MS} and \ref{lem:still-nonzero}, they are a higher Specht basis. 
\end{proof}

\section{Future directions and observations}\label{sec:future}

One corollary of Theorem \ref{thm:main} is that we can express the graded Frobenius series for hook shapes $\mu=(n-k+1,1^{k-1})$ as
$$\grFrob_{q,t}(\DR_\mu)=\sum_{S\in \SYT(n)} q^{\cc_\mu(S)}t^{\cc'_\mu(S)}s_{\sh(T)}.$$
It is a more general open problem than Problem \ref{prob:nfact} to find a combinatorial Schur expansion for the Frobenius series of $\DR_\mu$, and one possible route would be to generalize these new $\cc_\mu$ and $\cc'_\mu$ statistics to more general shapes $\mu$.  One also could attempt to generalize the higher Specht basis directly, since any higher Specht construction would automatically give a formula for the Frobenius character.

As for the diagonal coinvariant ring (Problem \ref{prob:basis-DR}), it would be interesting to see if a higher Specht basis could be constructed for them by combining the general tools from Section \ref{sec:general} with the new basis of Carlsson and Oblomkov \cite{CarlssonOblomkov}, or by interpreting parking functions as certain pairs of tableaux.  
In addition, Garsia and Zabrocki \cite{garsiaZabrocki} have established another basis for the diagonal harmonic alternants, essentially giving the copies of $s_{(1^n)}$ in $\DR_n$ for all $n$, and there is a combinatorial formula due to Alfano \cite{Alfano} for the Frobenius character of subspace having degree $1$ in the $y$ variables.  These are both potential starting points for constructing (partial) higher Specht bases for $\DR_n$, and potentially resolving the question of finding a Schur positive expansion for $\grFrob_{q,t}(\DR_n)$.

To conclude, we provide higher Specht bases for $\DR_2$ and $\DR_3$.  The Shuffle theorem (conjectured in \cite{HHLRU}, proven in \cite{shuffle}) gives a combinatorial formula for the graded Frobenius series of $\DR_n$, which is also equal to a Macdonald eigenoperator $\nabla$ applied to the elementary symmetric function $e_n$.

\begin{figure}[t]
    \begin{center}
    \begin{tabular}{cc||cc}
    Frobenius term & Basis polynomials & Frobenius term & Basis polynomials \\\hline
    & & & \\
    $s_{(3)}$ & $1$ & $q^2s_{(2,1)}$ & $\varepsilon_{{\tiny \young(3,12)}}x_3x_2$ \\
     & & & $\varepsilon_{{\tiny \young(2,13)}}x_2x_3$\\
    $qs_{(2,1)}$ & $x_2-x_1$ & & \\
            & $x_3-x_1$ & $t^2s_{(2,1)}$ & $\varepsilon_{{\tiny \young(3,12)}}y_3y_2$\\
            & & & $\varepsilon_{{\tiny \young(2,13)}}y_2y_3$\\
    $ts_{(2,1)}$ & $y_2-y_1$ & & \\
            & $y_3-y_1$ & $q^3 s_{(3)}$ & $(x_3-x_2)(x_3-x_1)(x_2-x_1)$\\
            & & & \\
    $qts_{(1,1,1)}$ & $\varepsilon_{{\tiny \young(3,2,1)}}x_3y_1$ & $t^3s_{(3)}$& $(y_3-y_2)(y_3-y_1)(y_2-y_1)$ \\
    & & & \\
    $qts_{(2,1)}$ & $\varepsilon_{{\tiny \young(3,12)}}x_3y_2$ & $q^2t s_{(1,1,1)}$& $\varepsilon_{{\tiny \young(3,2,1)}}x_3^2y_1$\\
    & $\varepsilon_{{\tiny \young(2,13)}}x_2y_3$ & $qt^2 s_{(1,1,1)}$ & $\varepsilon_{{\tiny \young(3,2,1)}}x_3y_1^2$\\
\end{tabular}
\end{center}
    \caption{A set of higher Specht polynomials for $\DR_3$.}
    \label{fig:DR3}
\end{figure}

We used Sage to expand $\nabla e_2$ and $\nabla e_3$ in terms of Schur functions, giving us the decompositions of $\DR_2$ and $\DR_3$ into irreducibles.  Then, by testing that the following polynomials, which generate the irreducibles in the correct degrees, do not vanish in the ideal, we were able to construct the following bases.

\begin{example}
For $\DR_2$, a higher Specht basis is: $1, x_2-x_1, y_2-y_1$.
\end{example}

\begin{example}
    For $\DR_3$, its graded Frobenius series from the Shuffle theorem is $$\nabla e_3 = s_{(3)}+(q+t)s_{(2,1)}+(q^2+qt+t^2)s_{(2,1)}+qt s_{(1,1,1)}+(q^3+t^3+q^2t+t^2q)s_{(1,1,1)}.$$  We verified in Sage \cite{sage} that the set of polynomials in Figure \ref{fig:DR3} do not vanish modulo the ideal $I_n$ generated by the positive degree diagonal invariants.  Therefore, by noting that the degrees of each higher Specht polynomial match the Frobenius series, they form a higher Specht basis for $\DR_3$.
\end{example}

\printbibliography

\end{document}